\renewcommand{\mid}{|}
\newcommand{\rrvert}{\vert}
\newcommand{\llvert}{\vert}
\newtheorem{conjecture}{Conjecture}
\newtheorem{open}{Open Problem}
\newtheorem{lemma}{Lemma}
\newtheorem{proposition}{Proposition}
\newtheorem{theorem}{Theorem}
\def\vk{\mathbf{k}}
\newcommand{\degDist}{\mathrm{p}}
\begin{document}
\begin{frontmatter}

\title{Jigsaw percolation: What social networks can collaboratively
solve a puzzle?}
\runtitle{Jigsaw percolation}

\begin{aug}
\author[A]{\fnms{Charles D.} \snm{Brummitt}\corref{}\thanksref{T1}\ead[label=e1]{brummitt@gmail.com}\ead[label=u1,url]{www.math.ucdavis.edu/\textasciitilde cbrummitt/}},
\author[B]{\fnms{Shirshendu} \snm{Chatterjee}\ead[label=e2]{shirshendu@cims.nyu.edu}\ead[label=u2,url]{www.cims.nyu.edu/\textasciitilde chatterj/}},\\
\author[C]{\fnms{Partha S.} \snm{Dey}\thanksref{T3}\ead[label=e3]{psdey1@gmail.com}\ead[label=u3,url]{www2.warwick.ac.uk/fac/sci/statistics/staff/academic-research/dey/}}
\and
\author[D]{\fnms{David} \snm{Sivakoff}\thanksref{T4}\ead[label=e4]{dsivakoff@stat.osu.edu}\ead[label=u4,url]{www.stat.osu.edu/\textasciitilde dsivakoff/}}
\runauthor{Brummitt, Chatterjee, Dey and Sivakoff}
\affiliation{University of California, Davis,
New York University,\\ 
University of Warwick and
Ohio State University}
\address[A]{C. D. Brummitt\\
Department of Mathematics\\
University of California\\
One Shields Avenue\\
Davis, California 95616\\
USA\\
\printead{e1}\\
\printead{u1}}
\address[B]{S. Chatterjee\\
Courant Institute\\
\quad of Mathematical Sciences\\
New York University\\
251 Mercer Street\\
New York, New York 10012\\
USA\\
\printead{e2}\\
\printead{u2}}
\address[C]{\hspace*{61pt}P. S. Dey\\
\hspace*{61pt}Department of Statistics\\
\hspace*{61pt}University of Warwick\\
\hspace*{61pt}Gibbet Hill Road\\
\hspace*{61pt}Coventry\\
\hspace*{61pt}CV4 7AL\\
\hspace*{61pt}United Kingdom\\
\hspace*{61pt}\printead{e3}\\
\hspace*{61pt}\printead{u3}}
\address{}
\address[D]{D. Sivakoff\\
Department of Statistics\\
\quad and Department of Mathematics\\
Ohio State University\\
1958 Neil Avenue, 404 Cockins Hall\\
Columbus, Ohio 43210\\
USA\\
\printead{e4}\\
\printead{u4}}
\end{aug}
\thankstext{T1}{Supported by the Statistical and Applied Mathematical
Sciences Institute (SAMSI),
the Department of Defense (DoD) through the National Defense Science
and Engineering Graduate Fellowship \mbox{(NDSEG)} Program,
the Defense Threat Reduction Agency Basic Research Award
HDTRA1-10-1-0088 and the Army Research Laboratory Cooperative Agreement
W911NF-09-2-0053.}
\thankstext{T3}{Supported by Simons Postdoctoral Fellowship. Did this
work while at the Courant Institute at New York University.}
\thankstext{T4}{Supported in part by NSF Grant DMS-10-57675 and by
SAMSI. Did this work while at the Duke University Mathematics Department.}

\received{\smonth{9} \syear{2012}}
\revised{\smonth{5} \syear{2014}}

%
\begin{abstract}
We introduce a new kind of percolation on finite graphs called
\emph{jigsaw percolation}. This model attempts to capture networks
of people who innovate by merging ideas and who solve problems by
piecing together solutions. Each person in a social network has a
unique piece of a jigsaw puzzle. Acquainted people with compatible
puzzle pieces merge their puzzle pieces. More generally, groups of
people with merged puzzle pieces merge if the groups know one
another and have a pair of compatible puzzle pieces. The social
network solves the puzzle if it eventually merges all the puzzle
pieces. For an Erd\H{o}s--R\'{e}nyi social network with $n$ vertices
and edge
probability $p_n$, we define the critical value $p_c(n)$ for a
connected puzzle graph to be the $p_n$ for which the chance of
solving the puzzle equals $1/2$. We prove that for the $n$-cycle
(ring) puzzle, $p_c(n) = \Theta(1/\log n)$, and for an arbitrary
connected puzzle graph with bounded maximum degree, $p_c(n) =
O(1/\log n)$ and $\omega(1/n^b)$ for any $b>0$. Surprisingly, with
probability tending to 1 as the network size increases to infinity,
social networks with a power-law degree distribution cannot solve
any bounded-degree puzzle. This model suggests a mechanism for
recent empirical claims that innovation increases with social
density, and it might begin to show what social networks stifle
creativity and what networks collectively innovate.
\end{abstract}

%
\begin{keyword}[class=AMS]
\kwd[Primary ]{60K35}
\kwd{91D30}
\kwd[; secondary ]{05C80}
\end{keyword}
\begin{keyword}
\kwd{Percolation}
\kwd{social networks}
\kwd{random graph}
\kwd{phase transition}
\end{keyword}
\end{frontmatter}

\section{Introduction}
Solving difficult problems and creating new ideas are sometimes
compared to merging the pieces of a puzzle~\cite{Ball2014,JohnsonBook}.
Often these \mbox{breakthroughs} are achieved not by one person working in
isolation but rather by a collection of people who exchange and merge
partial solutions and ideas~\cite{JohnsonBook}.
As a result, the structure of collaboration networks (who collaborates
with whom) can affect the success of the network's creative output,
as found empirically for scientific breakthroughs~\cite
{Chai2011,Gerstein2007,Lambiotte2009} and for hit Broadway
musicals~\cite{Uzzi2005,Uzzi2008}. In business, some companies connect
their employees using internal social networks~\cite{Forbes} and
expertise location systems~\cite{TacitArticle} to match compatible
ideas and expertise. Some companies outsource their most difficult R\&D
problems to leverage knowledge worldwide using services such as \href
{http://www.innocentive.com/}{Innocentive} and \href{http://www.kaggle.com/}{Kaggle}.
Digital tools for massive collaboration are also being used to solve
problems in mathematics~\cite{Gowers2009}, climate change~\cite
{Introne2011} and software design~\cite{Lakhani2010}.

Here we formalize this metaphor of a large group of people
collaboratively solving a puzzle by introducing a new kind of
percolation on finite graphs that aims to model a network of people who
merge compatible ideas into bigger and better ideas.
The model is reminiscent of other models of percolation on graphs, such
as bond percolation~\cite{GrimmettBook} and bootstrap
percolation~\cite{Holroyd2003}, but jigsaw percolation has more
complex dynamics.

Consider a social network of $n$ people with vertex set $V = \{1, 2,\ldots, n\}$, each of whom has a unique ``partial idea'' that could
merge with one or more other partial ideas belonging to other people.
These ``partial ideas'' can be thought of as pieces of a jigsaw puzzle:
an idea is compatible with certain other ideas, just as a piece~of a
jigsaw puzzle can join with certain other puzzle pieces (in the correct
solution of the puzzle). Thus we use ``ideas'' and ``puzzle pieces''
interchangeably. The two networks are:
\begin{itemize}
\item the \emph{people graph} $(V,E_{\mathrm{people}})$, denoting who
knows and communicates with whom;
\item the \emph{puzzle graph} $(V,E_{\mathrm{puzzle}})$, denoting
which ideas are compatible and thus can merge to form a bigger, better idea.
\end{itemize}
In this paper, we assume each person has a unique idea, so there are
$n$ ideas (puzzle pieces), and the system of people and their
compatible ideas is a graph with two sets of edges, $E_{\mathrm
{people}}$ and $E_{\mathrm{puzzle}}$. Allowing a person to have
multiple ideas or multiple people to have the same idea requires two
vertex sets, which we leave for future work; see Section~\ref{discussion}.

Next we propose a natural dynamic for people to merge their compatible
ideas (puzzle pieces). If two people $u,w$ know each other and have
compatible puzzle pieces (i.e., $uw \in E_{\mathrm{people}}\cap
E_{\mathrm{puzzle}}$), then they merge their puzzle pieces. After
$u,w$ merge their puzzle pieces, we say that $u,w$ belong to the same
\emph{jigsaw cluster} \mbox{$U \subseteq V$}. The general rule is that two
jigsaw clusters $U,W$ merge if at least two people (one from each
cluster) know each other, and at least two people (one from each
cluster) have compatible puzzle pieces. More precisely, we say that
jigsaw clusters $U, W$ are \emph{people-adjacent} if $uw \in
E_{\mathrm{people}}$ for some $u \in U, w \in W$. Similarly, $U,W$ are
\emph{puzzle-adjacent} if $u'w' \in E_{\mathrm{puzzle}}$ for some $u'
\in U, w' \in W$. Jigsaw clusters $U,W$ merge if they are both
people-adjacent and puzzle-adjacent.

The motivation for this dynamic is the notion that after merging their
ideas, a~group of people can use any of those ideas to merge with the
ideas of other people whom they know. We illustrate this in Figure~\ref
{uvfigure}. Here two nodes $u,w$ in different jigsaw clusters $U, W$
know each other ($uw \in E_{\mathrm{people}}$), but their puzzle
pieces are incompatible ($uw \notin E_{\mathrm{puzzle}}$). However,
$u$ and $w$ have merged their puzzle pieces with those of $u'$ and
$w'$, respectively, and $u'$ and $w'$ do have compatible puzzle pieces
($u'w' \in E_{\mathrm{puzzle}}$). Thus $u$ can tell $w$ about her
friend $u'$, and $w$ can tell $u$ about his friend $w'$. Then $u'$ and
$w'$ merge their compatible puzzle pieces, and the jigsaw clusters $U$
and $W$ merge.

%
\begin{figure}

\includegraphics{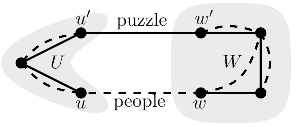}

\caption{Illustration of the jigsaw dynamic. Dashed and solid edges
denote the people graph and puzzle graph, respectively. Jigsaw clusters
$U$ and $W$ contain three and four nodes each. Nodes $u,w$ know each
other but do not have compatible puzzle pieces. However, they have
merged their puzzle pieces with nodes $u', w'$, who do have compatible
puzzle pieces. Thus $U$ and $W$ merge.}\label{uvfigure}
\end{figure}

Our main results, Theorems~\ref{teo-ring} and~\ref{teo-d>1},
characterize a phase transition in the probability that a random graph
solves a jigsaw puzzle in the manner described above. We find, roughly
speaking, the required number of interactions among a group of people
for them to collectively solve a large puzzle. This phase transition
might begin to inform what properties of social networks facilitate
their ability to collaboratively solve problems and to innovate.

\subsection{Related literature}
Previous models of scientific discovery and innovation can be roughly
partitioned into three sets.
Models in the first set focus on the structure of the social network
but not on the space of ideas; an example is an epidemic model of a
single idea that spreads like a slow, hard-to-catch disease in a social
network~\cite{Bettencourt2006,Bettencourt2008}.
Models in the second set focus on the space of ideas but not on the
social network; an example is a branching process of new ideas mating
with old ones~\cite{Sood2010}.
Models in the third set attempt to capture both the social network and
how it interacts with some space of ideas. One example is a model of
people trading and gifting ideas with neighbors in a social network to
obtain certain ideas needed to produce an output~\cite{Cowan2007}.
Four other models in this set are reviewed in~\cite{Chen2009}: an ant
colony model of scientists seeking papers to cite like ants seeking
food; the costs and benefits of hunting for references in bibliographic
habitats (``information foraging theory''); the A--B--C model of
finding triadic closure among ideas; and bridging structural holes
(gaps between dense communities of graphs) in networks of people and
ideas. However, researchers have noted the difficulty in modeling how
teamwork and collaboration lead to greater collective creativity and
discovery~\cite{Bettencourt2009,Duch2010}. Our contribution to this
literature is a model that focuses on the way people might
collaboratively merge their partial solutions to a difficult problem
(or their partial ideas that combine to form a better idea).


\subsection{Road map for the paper}
In Section~\ref{secformaldef}, we define the jigsaw percolation
process formally. We present the main results in Section~\ref
{secresults} and prove them in Sections~\ref{secERproofs}--\ref{power-law}. In Section~\ref{discussion}, we discuss simulations and
open questions.

\section{Formal definition of jigsaw percolation}\label{secformaldef}
Formally, jigsaw percolation on $(V$, $E_{\mathrm{people}}$,
$E_{\mathrm{puzzle}})$ proceeds in steps as follows. At every step
$i\ge0$, we have a partition $\mathcal{C}_{i}$ of the vertex set $V$.
The elements of $\mathcal{C}_i$, called ``jigsaw clusters,'' are
labels on vertices that denote which puzzle pieces have merged by step $i$:
\begin{longlist}[(1)]
\item[(1)] Initially, $\mathcal{C}_0$ is the set of singletons $\{\{
v\}\dvtx  v \in V\}$.
%
\item[(2)] 
At step $(i+1) \geq1$, we merge every pair of jigsaw clusters in
$\mathcal{C}_i$ that are both puzzle- and people-adjacent; see
Figure~\ref{mergeclusters}.
\end{longlist}
For example, after the first step, $\mathcal{C}_1$ is the set of
connected components in the graph $(V,E_{\mathrm{people}}\cap
E_{\mathrm{puzzle}})$.
Note that three or more jigsaw clusters can merge simultaneously, as
illustrated in Figure~\ref{mergeclusters}.

%
\begin{figure}[b]

\includegraphics{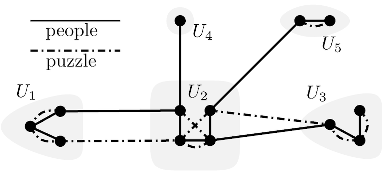}

\caption{Jigsaw clusters $U_1$, $U_2$, $U_3$, $U_4$, $U_5 \in\mathcal{C}_i$
at stage $i$. At stage $i+1$, jigsaw clusters $U_1$, $U_2$, $U_3$ merge.}\label{mergeclusters}
\end{figure}

It is useful to write jigsaw percolation as a dynamical system as
follows. At step $i$, let $\mathcal{E}_i$ be the unordered pairs of
clusters in $\mathcal{C}_i$ that are people-adjacent and
puzzle-adjacent. Then the jigsaw clusters in $\mathcal{C}_{i+1}$ are
the connected components of the graph $(\mathcal{C}_i, \mathcal{E}_i)$:
%
\begin{equation}
\label{comps-iter} \mathcal{C}_{i+1} = \biggl\{\bigcup
_{U \in A} U\dvtx  A \mbox{ is a connected component of } (
\mathcal{C}_i, \mathcal{E}_i) \biggr\}.
\end{equation}

Given $(V, E_{\mathrm{people}}, E_{\mathrm{puzzle}})$, we merge
jigsaw clusters until no more merges can be made, that is, iterate
equation~(\ref{comps-iter}) to a fixed point $\mathcal{C}_\infty$.
After finitely many steps, no more merges can be made. We say that
\emph{the people graph solves the puzzle} if all nodes belong to the
same jigsaw cluster at the end of the process (i.e., \mbox{$\mathcal
{C}_\infty= \{V\}$}). Figure~\ref{dynamicillustration} illustrates a
people graph that fails to solve a $2\times2$ puzzle.

%
\begin{figure}

\includegraphics{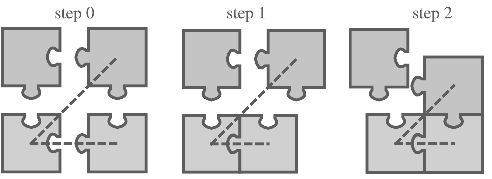}

\caption{A complete trajectory of the jigsaw dynamics. The people
graph (dashed edges) does not solve this $2 \times2$ puzzle.}\label{dynamicillustration}
\end{figure}

An equivalent definition of the process that is elegant and simple to
code on the computer is to iteratively contract nodes that are adjacent
in $E_{\mathrm{people}}\cap E_{\mathrm{puzzle}}$ until no more
contractions are possible. The people graph solves the puzzle if this
procedure ends with a single node.

\section{Statement of results}\label{secresults}
\subsection{Erd\H{o}s--R\'{e}nyi random graphs solving ring and bounded-degree puzzles}
In most of this paper, we consider people graphs that are Erd\H
{o}s--R\'{e}nyi random graphs $\mathcal{G}(n,p_n)$, in which each
possible edge appears independently with probability $p_n$, with
associated probability distribution $\mathbb{P}_{p_n}$. (The exception
is Section~\ref{power-law}, in which we consider power-law random
graphs rather than Erd\H{o}s--R\'{e}nyi random graphs.) For a fixed,
connected puzzle graph of size $n$, we are interested in the
probability of the event
\[
\mbox{\texttt{Solve}}:= \{\mbox{the people graph solves the puzzle}\} = \bigl\{
\mathcal{C}_\infty= \{V\}\bigr\}.
\]
We denote this probability by $\mathbb{P} (\mbox{\texttt{Solve}} )$ or by $\mathbb
{P}_{p_n} (\mbox{\texttt{Solve}} )$ to make explicit the
value of $p_n$. Note that the jigsaw dynamic is \emph{monotonic}, in
that adding more edges to the people graph or to the puzzle graph
cannot decrease the chance of solving the puzzle. Thus, for fixed $n$,
$\mathbb{P}_{p} (\mbox{\texttt{Solve}} )$ is
nondecreasing with $p$. Trivially, $\mathbb{P}_{0} (\mbox
{\texttt{Solve}} )=0$ and $\mathbb{P}_{1} (\mbox{\texttt
{Solve}} )=1$. Furthermore, $\mathbb{P}_{p} (\mbox{\texttt
{Solve}} )$ is a polynomial in $p$ of degree at most ${n\choose
2}$. Thus for each $n$ there exists a unique $p \in(0,1)$ such that
$\mathbb{P}_{p} (\mbox{\texttt{Solve}} ) = 1/2$, and we
make the following definition.

%
\begin{definition}
The \emph{critical value} $p_c(n)$ for solving a connected puzzle is the
unique value of $p_n\in(0,1)$ such that $\mathbb{P}_{p_n} (\mbox{\texttt{Solve}}) = 1/2$.
\end{definition}

%
\begin{remark}
There is nothing special about the number $1/2$. For our results, we
could have taken any fixed positive real number strictly smaller than
$1$. However, the critical value $p_{c}(n)$ depends on the choice of
the puzzle graph, which we suppress in the notation $p_{c}(n)$.
\end{remark}

%
\begin{remark}\label{rmk-connected-people}
If the people graph is not connected, then the puzzle cannot be solved.
Thus $p_{c}(n)\ge t_{n}$, where $t_{n}$ is the unique real number such
that $\mathbb{P} (\mathcal{G}(n,t_n)\mbox{ is connected}
)=1/2$. Asymptotically we have $t_{n}\approx(\log n -\log\log
2)/{n}$; see~\cite{ER61}. Note that the equality $p_{c}(n)=t_{n}$
holds when the puzzle graph is the star graph $(\{1,2,\ldots,n\},\{
(i,n)\dvtx 1\le i<n\})$, because in this case the puzzle can be solved if
and only if the people graph is connected.
\end{remark}

We use the following standard notation for describing sequences of
nonnegative real numbers $a_n$ and $b_n$:
$a_n = O(b_n)$ means there exists $C>0$ so that $a_n \leq Cb_n$ for all
sufficiently large $n$; $a_n = \Theta(b_n)$ means $a_n = O(b_n)$ and
$b_n = O(a_n)$; $a_n = o(b_n)$ means $a_n/b_n \to0$ as $n\to\infty$;
and $a_n = \omega(b_n)$ means $b_n = o(a_n)$.

Our main results are the following two theorems.

%
\begin{theorem}[(Ring puzzle)]\label{teo-ring}
If the people graph is the Erd\H{o}s--R\'{e}nyi random graph and the
puzzle graph is the $n$-cycle, then
\[
\frac{1}{27 \log n}\leq p_c(n) \leq\frac{\pi^2}{6 \log n}\bigl(1+o(1)
\bigr).
\]
Moreover, for $p_{n}=\lambda/\log n$, $\mathbb{P}_{p_n} (\mbox{\texttt{Solve}}) \to0$ or $1$ according as $\lambda
<1/27$ or $\lambda>\pi^2/6$.
\end{theorem}

%
\begin{remark}
We believe that our upper bound is tight; see Section~\ref{discussion}. We did not attempt to optimize the constant $1/27$ in the
lower bound; this value was chosen to make the proof easier to read. We
do not think that our proof method will yield an optimal lower bound.
\end{remark}

%
\begin{theorem}[(Connected puzzle of bounded degree)] \label{teo-d>1}
For an Erd\H{o}s--R\'{e}nyi people graph solving a connected puzzle
with bounded maximum degree,  $p_c(n) = O(1/\log n)$ and $p_c(n) =
\omega(1/n^b)$ for any~$b>0$. In particular, we have $\mathbb
{P}_{p_n} (\emph{\mbox{\texttt{Solve}}} ) \to0$ for
$p_n= O(1/n^b)$ for any $b>0$, and $\mathbb{P}_{p_n} (\emph
{\mbox{\texttt{Solve}}} ) \to1$ for $p_n=\lambda/\log n$ with
$\lambda>\pi^2/6$.
\end{theorem}

%
\begin{remark}
The upper bound for $p_{c}(n)$ in Theorem~\ref{teo-d>1} holds for any
connected puzzle graph, even with maximum degree growing with $n$ as
$n\to\infty$; see Proposition~\ref{ubd-ring}. The star graph example
in Remark~\ref{rmk-connected-people} provides a counterexample to the
lower bound when the maximum degree is unbounded.
\end{remark}

%
\begin{remark}
The jigsaw dynamic is symmetric under swapping the people and puzzle
graphs. Thus Theorems~\ref{teo-ring} and~\ref{teo-d>1} also apply to
a ring and bounded-degree people graph (resp.) solving an Erd\H
{o}s--R\'{e}nyi puzzle.
\end{remark}

Some of the techniques in our proofs resemble those used for long-range
percolation and for bootstrap percolation, but our arguments
differ in key ways. In our proof of the lower bound on $p_c(n)$ for
the ring puzzle graph, we show that a set of cut points, which must
separate jigsaw clusters in the final configuration~$\mathcal{C}_\infty$, exists with high probability for sufficiently
small $p$. This is similar in spirit to finding a positive density
of points over which no edge crosses in the context of
one-dimensional long range
percolation~\cite{Schulman1983,Coppersmith2002} to show that no
infinite component exists.

In our proof of the upper bound on $p_c(n)$, we use the fact that
once a sufficiently large, solved cluster emerges, that cluster will
inevitably continue to merge and ultimately solve the puzzle. As in
bootstrap percolation on the lattice graph~\mbox{\cite{AL88,Holroyd2003}},
our upper bound arises from a sufficient condition for the formation
of a large cluster.

\subsection{Power-law random graphs solving bounded-degree puzzles}
As a model of social networks, the Erd\H{o}s--R\'{e}nyi random graph
assumes no
structure other than the average number of connections
(neighbors) per person. However, in many social networks---from
scientific citations~\cite{Redner1998} to scientific
collaborations~\cite{Barabasi2002,Newman2001phys,Newman2001pnas} to
sexual partners~\cite{Liljeros2001}---some
people have orders of magnitude more connections than others. The
broad-scale degree distributions of such networks are well described
by a power-law (or by a power-law with a cutoff), in which the
fraction of vertices having degree $k$ is proportional to
$k^{-\alpha}$ for some power $\alpha>2$. In light of these findings,
we consider jigsaw percolation on people graphs that are given by
the configuration model~\cite{Molloy1995} with limiting power-law
degree distribution $\mathbf p=\{\degDist_k\}$ satisfying
%
\begin{eqnarray}
\label{distdef} \degDist_k &=& 0\qquad\mbox{for }k< d_{\min}
\mbox{ for some }d_{\min} \ge3\quad\mbox{and}
\nonumber\\[-8pt]\\[-8pt]\nonumber
\degDist_k &\asymp& k^{-\alpha+o(1)}\qquad\mbox{as }k \to\infty
\mbox{ for some power } \alpha>2. 
\end{eqnarray}
The condition $d_{\min}\ge3$ is imposed to ensure that the
resulting people graph is connected with high probability. Here and
later the phrase ``with high probability'' refers to ``with
probability tending to 1 as the size of the graph (network) grows to
infinity.''


In the configuration model, the people graph $(V,E_{\mathrm{people}})$
is constructed in two stages.
Assuming $|V|=n$, first the degrees
$d_1, d_2, \ldots, d_n$ are chosen to be i.i.d. from the aimed
degree distribution $\mathbf p$, and $d_i$ many half-edges are
assigned to vertex $i, 1\le i\le n$. We make the sum of the degrees
even by possibly adding one to~$d_n$. This has no effect on the
analysis that follows. Then, conditioned on $\{d_i\}_{i=1}^n$,
$(V,E_{\mathrm{people}})$ is chosen uniformly from the collection of
(multi-)graphs having degree sequence $(d_1, d_2, \ldots, d_n)$ by
randomly matching the half-edges at each vertex.

Surprisingly, such heterogeneous social networks cannot solve a
large class of puzzles.

%
\begin{proposition} \label{jigsawwithCM}
For any $\alpha>2$, if $(V,E_{\mathrm{people}})$ is given by the configuration
model on $n$ vertices
with power-law degree distribution $\mathbf p$ satisfying (\ref
{distdef}), and if $(V,E_{\mathrm{puzzle}})$ has bounded maximum
degree, then
$\mathbb{P} (\emph{\mbox{\texttt{Solve}}} ) \to0$ as
$n\to\infty$.
\end{proposition}

%
\begin{remark}
Because collaboration networks in
science~\cite{Barabasi2002,Newman2001phys,Newman2001pnas}
manage to collectively solve puzzles despite their degree
distributions being well modeled by power-laws with exponential
decay, more realistic assumptions, such as unbounded-degree puzzles
and randomly grown collaboration networks, merit future work; see
Section~\ref{discussion} for more details.
\end{remark}

For degree exponent $\alpha> 2$ of the social network, we expect
Proposition~\ref{jigsawwithCM} to hold for models of power-law
random graphs other than the configuration model as well. It is easy
to check that the maximum of $n$ i.i.d.~random variables from the
distribution given in~(\ref{distdef}) is tight under the scaling
$n^{-1/(\alpha-1)}$. Thus one expects to couple the power-law
random graph as a subgraph of an Erd\H{o}s--R\`enyi random graph with
edge probability $1/n^{b}$ with $b < 1/(\alpha-1)$ and deduce
Proposition \ref{jigsawwithCM} from Theorem~\ref{teo-d>1} and a
monotonicity argument. This conclusion is indeed true for the
Chung--Lu power-law random graph model (cf.~\cite{cl02}) with
$\alpha>3$. However, for $\alpha<3$ the power-law random graphs
contain large cliques having size polynomial in $n$. This excludes
the possibility of the above coupling, as the maximum size of a
clique in the Erd\H{o}s--R\'{e}nyi random graph $\mathcal
{G}(n,n^{-b})$ is at most
poly-logarithmic in $n$.

The proof of Proposition~\ref{jigsawwithCM}, presented in
Section~\ref{power-law}, circumvents this issue
with a direct argument without the need for any coupling.
Furthermore, for $\alpha\in(1,2)$, we expect the power-law random
graph given by the configuration model to solve any bounded-degree
puzzle with high probability, because then the people graph has very
small diameter; cf.~\cite{infmean}. However, we do not have a
rigorous proof for that conjecture.

\subsection{Subsequent work}
After this work appeared as a preprint, Slivken~\cite{slivken}
proved a related result for random puzzle graph. In this model, both
the people and the puzzle graphs are Erd\H{o}s--R\'{e}nyi with edge
probabilities
$p_{\mathrm{ppl}}$ and $p_{\mathrm{puz}}$, respectively, which satisfy
$p_{\mathrm{ppl}}\wedge p_{\mathrm{puz}} \geq(1+\varepsilon) \log n / n$
for some $\varepsilon>0$ to ensure that both graphs are connected with
high probability. It is shown in~\cite{slivken} that the
probability of solving the puzzle is close to zero if
$p_{\mathrm{ppl}}\cdot p_{\mathrm{puz}} \leq c/(n\log n)$ and is close
to one if $ p_{\mathrm{ppl}} \cdot p_{\mathrm{puz}} \geq\log\log n /(c
n \log n)$, for some constant $c > 0$. In another subsequent
paper~\cite{GS13}, Gravner and one of the present authors proved
that for an Erd\H{o}s--R\'{e}nyi people graph solving a general puzzle
graph with
bounded maximum degree $D$, the critical value $p_{c}$ is
$\Theta(1/\log n)$, where the constants depend only on~$D$.

\section{Erd\H{o}s--R\'{e}nyi random graphs solving ring and bounded-degree puzzles}
\label{secERproofs}
In this section, we prove Theorems~\ref{teo-ring} and \ref{teo-d>1},
in which the people graph is the Erd\H{o}s--R\'{e}nyi random graph.
In Section~\ref{secringupperbound}, we prove the upper bound on the
critical value $p_c(n)$ for both Theorems~\ref{teo-ring} and~\ref
{teo-d>1}. In Section~\ref{secringlowerbound}, we prove the lower
bound for the ring puzzle in Theorem~\ref{teo-ring}, and in
Section~\ref{secd>1lowerbound} we prove the lower bound for
arbitrary puzzles with bounded maximum degree.

\subsection{Upper bound on the critical value}\label{secringupperbound}
In this section, we prove that the critical value has upper bound $\pi
^2/(6 \log n)$ for any connected puzzle graph.

%
\begin{proposition}[(Upper bound for the critical value)]
\label{ubd-ring}
For an Erd\H{o}s--R\'{e}nyi people graph and any connected puzzle
graph on $n$ vertices, if $\lambda> \pi^2/6$ and $p_n = {\lambda
}/{\log n}$, then
\[
\lim_{n\to\infty} \mathbb{P}_{p_n} (\mbox{\texttt{Solve}} )
=~1.
\]
\end{proposition}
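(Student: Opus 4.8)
The strategy is a two-phase argument. In the first phase, I show that with high probability, a large jigsaw cluster forms; in the second phase, I show that once a cluster covers a positive fraction (or more precisely, enough) of the puzzle graph, the jigsaw dynamics inevitably solves the entire puzzle. The key insight for the second phase is that the jigsaw dynamic is driven by the interplay of people-adjacency and puzzle-adjacency, and connectivity of the puzzle graph means that any partial cluster has many puzzle-neighbors outside it; if the people graph is dense enough (which $p_n = \lambda/\log n$ with $\lambda > \pi^2/6$ will guarantee once clusters are large), those puzzle-adjacencies are very likely accompanied by people-adjacencies.

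First I would set up the first phase by restricting attention to a path or a spanning subtree of the connected puzzle graph — since adding puzzle edges only helps (monotonicity), it suffices to handle the case where the puzzle graph is a path on $n$ vertices, i.e., essentially the ring puzzle minus one edge. Along this path, I would track the growth of the jigsaw cluster containing a fixed vertex. A cluster that is an interval $[a,b]$ along the path merges with the singleton at $b+1$ precisely when there is a people-edge between $\{b+1\}$ and the interval $[a,b]$ (puzzle-adjacency is automatic since $b(b+1)$ is a puzzle edge), which happens with probability $1 - (1-p_n)^{b-a+1} \approx 1 - n^{-\lambda(b-a+1)/\log n \cdot \log n / \log n}$. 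Here the relevant heuristic is that an interval of length $\ell$ absorbs its neighbor with probability roughly $1 - e^{-p_n \ell}$, so once $\ell \gg 1/p_n = (\log n)/\lambda$, growth is essentially deterministic. The delicate regime is small $\ell$: starting from a single vertex, the cluster grows by coalescing neighboring small intervals, and one must show that these initial coalescences happen fast enough to reach size $\gg \log n$ before running out of room. This is where the constant $\pi^2/6 = \sum 1/k^2$ enters: I expect a comparison with a process where intervals of length $k$ wait an expected-geometric time controlled by $(1-p_n)^k$, and summing the failure contributions across lengths $k = 1, 2, 3, \dots$ produces the $\sum_k k^{-2}$ series after taking $p_n = \lambda/\log n$ and optimizing. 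Concretely, I would partition the path into blocks of a suitable length $m$ (with $1 \ll m \ll \log n$, say $m = \log n / \sqrt{\lambda}$ or similar) and show each block internally coalesces into one cluster with probability bounded below, independently across blocks, then show adjacent coalesced blocks merge with each other.

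For the second phase, suppose we have produced, with high probability, a jigsaw cluster $U$ with $|U| = \omega(\log n)$ (or more safely $|U| \ge \eps n$ for a small constant $\eps$). I claim $U$ then swallows everything. The argument: the puzzle graph is connected, so there is a puzzle-edge from $U$ to $V \setminus U$; pick any vertex $w \notin U$ that is puzzle-adjacent to $U$. The probability that $w$ is \emph{not} people-adjacent to $U$ is $(1-p_n)^{|U|} \le \exp(-p_n |U|) = \exp(-\omega(1)) \to 0$; in fact since $|U|$ is large, this is $n^{-\omega(1)}$, so by a union bound over all $n$ vertices, with high probability every vertex that is ever puzzle-adjacent to the growing cluster is also people-adjacent to it and hence gets absorbed in the next step. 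Iterating (the cluster only grows, so the bound $(1-p_n)^{|U|}$ only improves), the cluster absorbs all of $V$ in at most $n$ steps. I would phrase this cleanly as: condition on the people-edges between $U$ and $V\setminus U$; with high probability every vertex in $V \setminus U$ has at least one people-edge into $U$, and then a trivial induction using connectivity of the puzzle graph finishes.

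The main obstacle is the first phase, specifically getting the constant $\pi^2/6$ rather than just \emph{some} constant. Showing $p_c(n) = O(1/\log n)$ with an unspecified constant is comparatively easy (any sufficiently large $\lambda$ makes interval-growth beat the finite length $n$ via crude union bounds). Pinning down $\lambda > \pi^2/6$ requires carefully accounting for the bottleneck at each small interval length $k$: the dominant obstruction to growth is that somewhere along the path a configuration of small clusters fails to merge, and one needs that the expected number of such "frozen" boundaries is $o(1)$. The computation should reduce to showing that $\sum_{k\ge 1} (\text{number of ways to place a length-}k\text{ gap}) \times (1-p_n)^{ck}$ is $o(1)$, and tuning the combinatorial factor against $(1-p_n)^{ck} = n^{-\lambda c k/\log n \cdot \log n/\log n}$... actually the exponent works out so that the relevant sum over scales $k$ is the harmonic-type series $\sum_k 1/k^2 \cdot (\text{something})$, forcing $\lambda > \pi^2/6$. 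I would also need to make sure the blocks I use are genuinely independent (they are, since disjoint blocks use disjoint sets of potential people-edges in $\mathcal{G}(n, p_n)$), and handle the $(1+o(1))$ in the theorem statement by allowing the block length and the threshold size to be chosen with a small slack that vanishes as $n \to \infty$.
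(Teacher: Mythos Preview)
Your two-phase architecture matches the paper's: build one sufficiently large internally solved set, then show it absorbs everything. Your second phase is exactly the paper's Lemma~\ref{suffcond}, and your treatment of it is correct.

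The gap is in the first phase, specifically in the block size and in where $\pi^2/6$ actually comes from. You suggest blocks of size $m \ll \log n$, followed by a separate step merging adjacent solved blocks. The paper instead takes $m = \lceil (1+\delta)(\log n)/\epsilon_n\rceil \asymp (\log n)^2$, large enough that a \emph{single} internally solved block already triggers Lemma~\ref{suffcond} directly, so no block-merging step is needed. With this choice the analysis is clean: the one-sided growth event within a block (vertex $j$ is people-adjacent to $\{1,\dots,j-1\}$ for each $j$) has probability at least $\prod_{j=1}^{2m}(1-(1-p_n)^j)$, and the key estimate (the paper's Lemma~\ref{num est}) is
\[
\sum_{j\ge 1}\log(1-e^{-j\epsilon}) \;=\; -\frac{\pi^2}{6\epsilon} + O(\log(1/\epsilon)),
\]
obtained by expanding $-\log(1-x)=\sum_k x^k/k$ and recognizing $\int_0^\infty -\log(1-e^{-t})\,dt = \sum_{k\ge 1}1/k^2 = \pi^2/6$. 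Thus each block is internally solved with probability roughly $n^{-\pi^2/(6\lambda)}$, and since there are about $n/m \asymp n/(\log n)^2$ independent blocks, the expected number of internally solved blocks is of order $n^{1-\pi^2/(6\lambda)}/(\log n)^2$, which tends to infinity precisely when $\lambda > \pi^2/6$.

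Your description of the $\pi^2/6$ mechanism---counting ``frozen boundaries'' and asking that their expected number be $o(1)$---is a first-moment obstruction argument, which is the natural tool for a \emph{lower} bound on $p_c$, not an upper bound. For the upper bound you want a \emph{lower} bound on a success probability, and the product $\prod_j(1-(1-p)^j)$ is the object to estimate. Your proposal hints at this (``intervals of length $k$ wait\dots controlled by $(1-p_n)^k$'') but then drifts to the wrong framework. Once you fix the block size and compute that product via the dilogarithm identity above, the argument is complete.
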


%
\begin{remark}
A close look\vspace*{1pt} at the proof of Proposition~\ref{ubd-ring} reveals that
the same conclusion is true as long as $p_{n}\ge\pi^2/(6 \log n)\cdot
(1+{c\log\log n}/{\log n})$ for some constant $c\in(0,\infty)$.
\end{remark}

For simplicity, one can look at the ring puzzle graph (the $n$-cycle), with
\[
E_{\mathrm{puzzle}}= \bigl\{(1,2), (2,3),\ldots,(n-1, n), (n,1)\bigr\}.
\]
The idea of the proof is the following sufficient condition to solve
the ring puzzle, illustrated in Figure~\ref{figupperbound}. Suppose
that in the people graph, node $2$ is adjacent to node~$1$; node $3$ is
adjacent to $1$ or $2$; node $4$ is adjacent to $1$, $2$ or $3$; and so
on, so that node $j$ is people-adjacent to at least one of $\{1, 2,\ldots, j-1\}$ for all $2 \leq j \leq n$ (as illustrated in Figure~\ref
{figupperbound}). Then the people graph solves the puzzle.

%
\begin{figure}[t]

\includegraphics{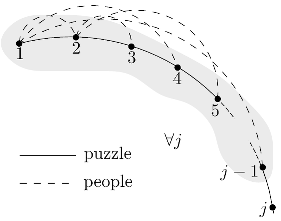}

\caption{Illustration of the sufficient condition to solve the ring
puzzle: $j$ is people-adjacent to $\{1, 2,\ldots, j-1\}$ for all $j=2,
3,\ldots, n$. This event is contained in the event $\mbox{\texttt{Solve}}$.}\label{figupperbound}
\end{figure}

However, to obtain a good bound, we do not consider solving the whole
puzzle in the manner depicted in Figure~\ref{figupperbound}. Instead,
we partition the puzzle graph into disjoint blocks and use the
sufficient condition depicted in Figure~\ref{figupperbound} within
each block. If the blocks are sufficiently large, then solving just one
block suffices to solve the whole puzzle. We call a set $U\subseteq V$
\emph{internally solved} if the people graph induced on $U$ can solve
the puzzle graph induced on $U$ and prove the existence of a ``large''
internally solved set.
We use the following lemma to partition the puzzle graph into disjoint
blocks. The motivation comes from analyzing the ring puzzle graph.

%
\begin{lemma}\label{lemblocks}
Let $m\ge1$ be a fixed integer. For any connected graph $G$ with
vertex set $V$, there exists an integer $k\ge\llvert V\rrvert /(2m)$
and subsets $\mathcal{B}_1,\mathcal{B}_2,\ldots,\mathcal{B}_k$ of
$V$ such that:
\begin{longlist}[(iii)]
\item[(i)] $V=\bigcup_{i=1}^{k} \mathcal{B}_i$;
\item[(ii)] $|\mathcal{B}_i|\in[m,2m]$ for $i=1,2,\ldots,k-1$ and
$|\mathcal{B}_k|<2m$;
\item[(iii)] the induced subgraph on $\mathcal{B}_i$ is connected for
all $i=1,2,\ldots,k$;
\item[(iv)] $\mathcal{B}_i$ and $\mathcal{B}_j$ share at most one
vertex in common for all $1\le i<j\le k$.
\end{longlist}
\end{lemma}

\begin{pf}
The proof proceeds by induction on $n:=|V|$. The lemma is obviously
true for $n \le2m$, so let us assume that $n\ge2m+1$.

For any connected graph $G$ of size $n$, fix a spanning tree $T$ of
$G$. Removing a single vertex $v_0$ from the tree $T$ results in
finitely many disjoint components $C_1,C_2,\ldots,C_k$, each of which
has a unique marked vertex adjacent to $v_0$ in $T$. We consider three
disjoint cases.

\begin{case}\label{case1}
If one of the components has size between $[m,2m]$,
we define this component as $\mathcal{B}_1$ and use induction on the
graph $G$ with the vertex set $\mathcal{B}_1$ removed, which is still
connected.
\end{case}

%
\begin{case}\label{case2}
If all of the components have size $<m$, define $l$ as the smallest
integer such that $|C_1|+|C_2|+\cdots+|C_{l-1}|<m$ and
$|C_1|+|C_2|+\cdots+|C_l|\ge m$. Such an $l$ exists, because
$|C_1|+|C_2|+\cdots+|C_k|=n-1>m$. Necessarily we have
$|C_1|+|C_2|+\cdots+|C_l|< 2m$, because $|C_{i}|<m$ for all $i$. We
take $\mathcal{B}_{1}:=\bigcup_{i=1}^l C_i\cup\{v_0\}$ and use
induction on the graph $G$ with vertex set $\bigcup_{i=1}^l C_i$ removed
(note that $v_0$ will appear in more than one subset because it has not
yet been removed from~$G$).
\end{case}

%
\begin{case}\label{case3}
If none of the components has size between $[m,2m]$ and at least one
component has size $>2m$, we choose one such component (and ignore the
other components), call it $V_1$, and remove the marked vertex $v_1$
from it. Removing $v_1$ creates several new components, each containing
a marked vertex adjacent to $v_{1}$ in $T$. We repeat this procedure
until reaching the following situation: the size of $V_{k}$ is $> 2m$,
but if we remove the marked vertex $v_k$ from it, then all the
resulting components have size $\le2m$. If one of them has size more
than $m$, then we take that component as $\mathcal{B}_1$, and we
continue by induction with the rest of the tree, which is connected by
construction. If all of the components have size $< m$, we follow the
steps in Case~\ref{case2} to define $\mathcal{B}_1$ and continue by induction.
\end{case}

To complete the proof we need to check properties (iii) and (iv) for
each block~$\mathcal{B}_i$, which follow easily from the spanning tree
and marked vertex construction.
\end{pf}

\begin{pf*}{Proof of Proposition~\ref{ubd-ring}}
Using Lemma~\ref{lemblocks}, we partition the puzzle graph into
blocks $\mathcal{B}_{1}$, $\mathcal{B}_{2}, \ldots, \mathcal
{B}_{k}$ of size $\le2m$ (where $m$ is determined later) with
$|\mathcal{B}_{i}|\ge m$ for all $i<k$. Note that $k\ge n/(2m)$. Let
$B_i$ be the event that block $\mathcal{B}_i$ is solved using only
people edges in block $\mathcal{B}_i$. Let $S:= \sum_{i=1}^{k-1}
\mathbh{1}_{B_i}$ be the number of blocks (excluding the last block
$\mathcal{B}_{k}$) that are solved using people edges only within each
block (i.e., internally solved). The events $B_i$ are independent
because the blocks use disjoint sets of edges, and they are Bernoulli
random variables with mean~$\mathbb{P} (B_i )$.

Next we show that if $p_n = \lambda/ \log n$ with $\lambda> \pi^2 /
6$, then
\[
\mathbb{P} (S\ge1 )\to1\qquad\mbox{as }n \to\infty.
\]

Consider the subgraph of the puzzle graph induced by $\mathcal
{B}_{i}$. We can fix a rooted spanning tree and label the vertices with
integers $1,2,\ldots,|\mathcal{B}_{i}|$ in such a way that the vertex
with label $j$ is puzzle-adjacent to the set of vertices with labels $\{
1,2,\ldots,j-1\}$ in the spanning tree for all $j\ge1$. As
illustrated in Figure~\ref{figupperbound}, a~sufficient condition for
the event $B_i$ to occur is the event
\begin{eqnarray*}
\overline{B_i}&:= &\bigl\{\mbox{for all }1\le j\le|
\mathcal{B}_i|,\mbox{ the vertex labeled $j$ is
people-adjacent}
\\
&&\hspace*{68pt}\mbox{to the set of vertices labeled $\{1, 2,\ldots, j-1\} $}\bigr\}
\subset B_i.
\end{eqnarray*}
[Note that there could be other ways to solve the puzzle. For example,
in the case of a ring puzzle, $j$ is people-adjacent to $j+1$, and
$j+1$ (but not $j$) is people-adjacent to $\{1,\ldots, j-1\}$. Thus
$\overline{B_1}$ is not a necessary condition for $B_1$ to occur, that
is, $\overline{B_1} \subsetneq B_1$.] The events that $j+1$ is
people-adjacent to $\{1, 2,\ldots, j\}$ occur independently with
probability $\ge1-(1-p_n)^{j}$, so
\[
\mathbb{P} (\overline{B_i} ) \ge\prod
_{j=1}^{|\mathcal
{B}_{i}|-1} \bigl( 1- (1-p_n)^{j}
\bigr) \ge\prod_{j=1}^{2m} \bigl( 1-
(1-p_n)^{j} \bigr).
\]
Thus the random variable $S$ stochastically dominates
\[
S' \sim\operatorname{Binomial}\Biggl(k-1,\prod
_{j=1}^{2m} \bigl( 1- (1-p_n)^{j}
\bigr)\Biggr).
\]

For $n \in\mathbb{N}$, let $\varepsilon_n:= - \log(1-p_n)$, so that
$1-p_n=e^{-\varepsilon_n}$. We use the next lemma to obtain a lower bound on
\begin{eqnarray*}
\log\mathbb{E} S' &=& \log(k-1) + \sum
_{j=1}^{2m} \log \bigl( 1- e^{-j \varepsilon_n} \bigr).
\end{eqnarray*}
The proof of Lemma~\ref{numest} follows the present proof.

%
\begin{lemma}\label{numest}
Let $\theta(x):=-\int_0^x\log(1-e^{-t}) \,dt$ for $x\in[0,\infty
]$. If $\lim_{\varepsilon\to0}m_{\varepsilon}\varepsilon=x\in[0,\infty]$, then
\[
\lim_{\varepsilon\to0} \varepsilon\sum_{i=1}^{m_{\varepsilon}}
\log \bigl(1-e^{-i\varepsilon}\bigr) = -\theta(x).
\]
Moreover, for all $m \geq1$ and $\varepsilon> 0$,
%
\begin{equation}
\label{eqlogpr-bd} \Biggl\llvert \sum_{i=1}^{m}
\log\bigl(1-e^{-i\varepsilon}\bigr) + \frac{\pi^2}{6
\varepsilon}\Biggr\rrvert \leq
\frac{1}{2} \log\frac{2e^2}{\varepsilon} + \frac{\pi^2}{6 \varepsilon e^{m\varepsilon}}.
\end{equation}
\end{lemma}

Fix $\delta>0$, and let $m:=\lceil(1+\delta) (\log n)/\varepsilon_n
\rceil$. Here we tacitly assume that $n$ is large, so that $2m< n$.
Using Lemma~\ref{numest}, we estimate
\begin{eqnarray*}
\log\mathbb{E} \bigl( S'\bigr) &\ge&\log \biggl(\frac{n}{2m}-1
\biggr) - \frac{\pi^2}{6\varepsilon_n} + \Biggl( \sum_{j=1}^{2m}
\log\bigl(1-e^{-j\varepsilon_n}\bigr) + \frac{\pi
^2}{6\varepsilon_n} \Biggr)
\\
&\geq& \biggl(1-\frac{\pi^2}{6\lambda} \biggr)\log n - \log\frac
{2m}{1-2m/n} -
\frac{1}{2} \log\frac{2e^2}{\varepsilon_n} - \frac{\pi
^2}{6 \varepsilon_n e^{2m\varepsilon_n}}
\\
&\ge& \biggl(1-\frac{\pi^2}{6\lambda} \biggr)\log n -\log\frac
{m}{\sqrt{\varepsilon_n}}- O(1)
\\
&\ge& \biggl(1-\frac{\pi^2}{6\lambda} \biggr)\log n -\frac{5}{2}\log \log n -
O(1)
\\
&\to&\infty\qquad\mbox{as } n \to\infty.
\end{eqnarray*}
In the last inequality we used the fact that $m=O(\log n/\varepsilon_n)$
and $\varepsilon_n\ge p_n =\lambda/\log n$. Since $S'$ is binomial,
$\mathbb{E} (S') \to\infty$ implies that $\mathbb{P} (S'\geq1 ) \to1$.

Let $I:=\inf\{i\ge1\dvtx  \mathcal{B}_{i}$ is internally solved$\}$ be
the random index such that $\mathcal{B}_{I}$ is the first block among
$\mathcal{B}_{1},\mathcal{B}_{2},\ldots$ that is internally solved.
We define $I=\infty$ when no internally solved block exists. Thus we
have $\mathbb{P} (I<\infty )=\mathbb{P} (S\ge1 )\ge\mathbb{P} (S'\ge1 )\to1$ as $n\to\infty$.

Let $U$ be a deterministic set of size $m$. The probability that all
the remaining $n-m$ vertices in $V\setminus U$ are connected to $U$ by
a people edge is
\[
\bigl(1-(1-p_n)^m\bigr)^{n-m} \ge
\bigl(1-e^{-\varepsilon_n m}\bigr)^n\ge1-ne^{-\varepsilon_n
m}
\ge1-n^{-\delta}.
\]
Note that by connectivity of the puzzle graph and people graph, the
event that all vertices in $V\setminus U$ are connected to $U$ by
people edges and $U$ is internally solved implies \mbox{\texttt
{Solve}}. Moreover the event that a particular set of vertices forms an
\emph{internally solved} subset or not depends only on the edges among
those vertices.
Thus we have
\begin{eqnarray*}
\mathbb{P} (\mbox{\texttt{Solve}} )&\ge& \mathbb{P} (\mbox{\texttt{Solve}}, I<\infty )
\\
&\ge&\sum_{i=1}^k\mathbb{P} (\mbox{\texttt{Solve}}|I=i )\mathbb{P} (I=i )\ge
\bigl(1-n^{-\delta}\bigr)\mathbb{P} (I<\infty )\to1
\end{eqnarray*}
as $n\to\infty$. The proof is complete.
\end{pf*}

\begin{pf*}{Proof of Lemma~\ref{numest}}
Note that
\begin{eqnarray*}
-\varepsilon\sum_{i=1}^{k}\log
\bigl(1-e^{-i\varepsilon}\bigr)
&=& \varepsilon\sum_{i=1}^{k}
\sum_{j=1}^{\infty}\frac{e^{-ij\varepsilon}}{j} = \varepsilon
\sum_{j=1}^{\infty}\frac{1-e^{-jk\varepsilon
}}{j(e^{j\varepsilon}-1)}
\\
&=& \sum_{j=1}^{\infty}\frac{1-e^{-jk\varepsilon}}{j^2} -
\sum_{j=1}^{\infty} \frac{(1-e^{-jk\varepsilon})(e^{j\varepsilon}-1-j\varepsilon
)}{j^2(e^{j\varepsilon}-1)}.
\end{eqnarray*}
Using the power series expression of $e^x$, it is easy to see that
$(e^x-1-x)/(e^x-1)\leq\min\{x/2,1\}$. Applying the last inequality,
we have
\begin{eqnarray*}
\sum_{j=1}^{\infty} \frac{(1-e^{-jk\varepsilon})(e^{j\varepsilon
}-1-j\varepsilon)}{j^2(e^{j\varepsilon}-1)}
&\leq&
\sum_{j=1}^{\infty} \frac{\min\{j\varepsilon/2,1\}}{j^2}
\leq\sum_{j\leq m}\frac{\varepsilon}{2j} +\sum
_{j>m}\frac{1}{j^2}
\\
&\leq& \frac{\varepsilon}{2} (\log m + 1) +
\frac{1}{m}
= \frac
{\varepsilon}{2}\log\frac{2e^2}{\varepsilon}
\end{eqnarray*}
using $m=2/\varepsilon$. Thus, combining the last two displays,
%
\begin{equation}
\label{eqlogpr-bd-lem} \Biggl\llvert \sum_{i=1}^{k}
\varepsilon\log\bigl(1-e^{-i\varepsilon}\bigr) + \sum_{j=1}^{\infty}
\frac{1-e^{-jk\varepsilon}}{j^2}\Biggr\rrvert \leq\frac
{\varepsilon}{2}\log\frac{2e^2}{\varepsilon}.
\end{equation}
In particular, if $\lim_{\varepsilon\to0}k_{\varepsilon}\varepsilon=x\in
[0,\infty]$, then interchanging the sum and the integral
\begin{eqnarray*}
\lim_{\varepsilon\to0} \varepsilon\sum_{i=1}^{k_{\varepsilon}}
\log \bigl(1-e^{-i\varepsilon}\bigr)
&=& -\sum_{j=1}^{\infty}
\frac{1-e^{-jx}}{j^2}
\\
&=&-\sum_{j=1}^{\infty}\frac{1}j \int
_0^x e^{-jt} \,dt
= \int
_{0}^{x}\log\bigl(1-e^{-t}\bigr) \,dt,
\end{eqnarray*}
which\vspace*{1pt} completes the proof. The bound~(\ref{eqlogpr-bd}) follows
from~(\ref{eqlogpr-bd-lem}) and the fact that $e^{-jk\varepsilon}\leq
e^{-k\varepsilon}$ for all $j\geq1$.
\end{pf*}

\subsection{Lower bound for the ring puzzle} \label{secringlowerbound}
In this section, we prove a matching-order lower bound for an Erd\H
{o}s--R\'{e}nyi people graph solving the ring puzzle. The idea of the
proof is to show the existence of a cut set that divides the ring into
pieces that never merge.

\begin{proposition}
\label{teoring-lower-bound}
For the ring puzzle graph, if $\lambda\le1/27$ and $p_n = \lambda
/{\log n}$, then $\mathbb{P}_{p_n} (\mbox{\texttt
{Solve}} )\to0$. Therefore $p_c(n) \geq1/(27 \log n)$.
\end{proposition}

\begin{pf}
Let $x$ be a positive integer to be chosen later [it~will be $\Theta
(\log n)$]. We will identify the vertices in the ring puzzle graph
$(V,E_{\mathrm{puzzle}})$ with elements from $\mathbb{Z}_n$, so that
two vertices $u,v\in\mathbb{Z}_n$ are neighbors if and only if
$u-v=\pm1$, where all additions and subtractions in $\mathbb{Z}_n$
are modulo $n$. We denote the interval $\{a,a+1,\ldots,b\}\subseteq
\mathbb{Z}_n$ by $[a,b]$ and its length by $|[a,b]|=b-a+1$.

Given an interval $I=[a,b]\subset\mathbb{Z}_n$, we call it \textit{$x$-good} if there is a vertex $u\in I$
such that $u$ is not people-adjacent to any vertex in the interval
$[a-x,b+x]$. We call the vertex $u\in I$ an \textit{$x$-good vertex in
$I$}. The proof hinges on the following observation.
Loosely speaking, if throughout the puzzle there are people
unacquainted with anyone in a sufficiently large neighborhood of the
puzzle, then these people obstruct the growing solution, and the social
network cannot solve the puzzle.

%
\begin{lemma}\label{lem-x-good}
Suppose that there exist integers $0=a_{0}<a_{1}<\cdots<a_{k}= n$ such
that, for all $j=0,1,\ldots,k-1$, the interval
$I_{j}:=[a_{j}+1,a_{j+1}]$ is $x$-good and has length $|I_j| \leq x$.
Then the puzzle cannot be solved.
\end{lemma}

\begin{pf}
Let $v_{j}\in I_{j}$ be an $x$-good vertex in $I_{j}$ for $j=0,1,\ldots,k-1$. Clearly $1\le v_{0}<v_{1}<\cdots<v_{k-1}\le n$. Furthermore,
each $v_{j}$ has no people edges with $[v_{j-1},v_{j+1}]$ (where
$j+\ell$ is taken modulo $k$) because $|I_j| \leq x$ for all
$j=0,1,\ldots,k-1$.

Suppose for contradiction that the puzzle can be solved. Then there
must exist a first stage, $i$, after which there exists an index $j$
such that two distinct vertices, $u \in[v_j, v_{j+1}]$ and $v\in
[v_{j+1},v_{j+2}]$, belong to the same cluster in $\mathcal{C}_i$. One
of these vertices must be $v_{j+1}$ (without loss of generality,
$u=v_{j+1}$), because otherwise $v_{j+1}$ would have to belong to a
larger cluster in $\mathcal{C}_{i-1}$, and therefore $v_{j+1}$ would
have merged at an earlier stage of the process, which is a
contradiction. Since $v_{j+1}$ is not people-adjacent to any other
vertices in $[v_{j+1},v_{j+2}]$, $v$ must be in a component in
$\mathcal{C}_{i-1}$ that contains vertices outside of
$[v_{j+1},v_{j+2}]$, but this is also a contradiction. Thus the puzzle
cannot be solved.
\end{pf}

In light of Lemma~\ref{lem-x-good}, to complete the proof we need to
show the existence of such intervals with probability tending to $1$.
Suppose $n\ge x^2$. Define $k:= \lfloor n/(x-1)\rfloor\le n$. Define
\begin{eqnarray*}
l_{i}&:=& x\qquad\mbox{for } 1\le i\le n-k(x-1),
\\
l_{i}&:=& x-1\qquad\mbox{for } n-k(x-1)<i\le k,
\end{eqnarray*}
and
$
a_{i}:= l_{1}+l_{2}+\cdots+l_{i}$ for $i=0,1,\ldots,k$.
Note that $a_{k}=n$.
Clearly all the intervals $I_{i}:=[a_{i}+1,a_{i+1}], 0\le i\le k-1$ are
of length $x-1$ or $x$. Let $Z$ be the number of intervals that are not
$x$-good,
\[
Z:= \sum_{i=0}^{k-1} \mathbh{1}_{\{\mathrm{the\ interval}\ I_i
\ \mathrm{is\ NOT\ x\mbox{-}good}\}}.
\]
It suffices to show that
$
\mathbb{P} (Z >0 ) \to0$ as $n\to\infty$
for appropriate choice of $x$. We will use Lemma~\ref{lemlocally-isolated} to estimate the probability that an interval is
not $x$-good.

%
\begin{lemma}\label{lemlocally-isolated}
Fix an integer $x\ge1$. Let $I$ be an interval of length $lx$ for some
number $l>0$. Suppose that $t:=px\in(0,1/(l+2))$. Then we have
\begin{eqnarray*}
\mathbb{P} (I \mbox{ is NOT $x$-good} )
&\le&\exp \biggl[-\frac{t}{2p} \bigl(2l\log(\sqrt {1+{l}/{t}}-1) +
\bigl(l^2+4l+2\bigr)t
\\
&&\hspace*{92pt}{}- 2t\sqrt{1+{l}/{t}}-2l\log l -l \bigr) \biggr].
\end{eqnarray*}
\end{lemma}

In our case, all intervals are of length $x-1$ or $x$, so $l\in
[1-1/x,1]$. If we suppose that $t:=px<1/3$, then
\begin{eqnarray*}
\mathbb{P} (Z >0 ) &\le&\mathbb{E} (Z)
\\
&\le& n \exp \biggl[-
\frac{t}{2p} \bigl(2\log(\sqrt{1+1/t}-1) + 7t - 2t\sqrt{1+1/t} -1 + \eta(x)
\bigr) \biggr],
\end{eqnarray*}
where $\eta(x)\to0$ when $x\to\infty$.
In particular, if $p=p_{n}=\lambda/\log n$ and $x=t\log n/\lambda$
for some $t<1/3$, we have
\begin{eqnarray*}
\mathbb{P} (Z>0 ) &\le&\exp \biggl[\log n -\frac{t\log n}{2\lambda} \bigl(2
\log(\sqrt {1+1/t}-1) + 7t
\\[-2pt]
&&\hspace*{86pt}{} - 2t\sqrt{1+1/t} -1+\eta(t\log n/\lambda) \bigr)
\biggr]
\\[-2pt]
&\to& 0\qquad\mbox{as } n\to\infty
\end{eqnarray*}
when
%
\begin{equation}
\label{eqlineq} \lambda< \frac{t}{2} \bigl[ 2\log(\sqrt{1+1/t}-1) + 7t - 2t
\sqrt {1+1/t} -1 \bigr].
\end{equation}
One can easily check (by taking $t=0.07$) that
\[
\sup_{t\in(0,1/3)} \frac{t}{2} \bigl[ 2\log(\sqrt{1+1/t}-1) +
7t - 2t\sqrt{1+1/t} -1 \bigr] > 1/27.
\]
Thus given $\lambda\le1/27$, we can choose $t\in(0,1/3)$ such that
(\ref{eqlineq}) holds, and taking $x=t\log n/\lambda$ we have
\[
\mathbb{P} \Biggl(\sum_{i=0}^{k-1}
\mathbh{1}_{\{\mathrm{the\ interval}\ I_i\ \mathrm{is\ NOT}\ x\mbox{-}\mathrm{good}\}} >0 \Biggr)\to0\qquad\mbox{as } n\to\infty.
\]
This completes the proof.
\end{pf}

\begin{pf*}{Proof of Lemma~\ref{lemlocally-isolated}}
Without loss of generality, suppose that the interval $I$ is $[1,lx]$.
Recall that $I$ is $x$-good if there is a vertex $u\in I$ such that $u$
has no people edges with $I_{x}:=[1-x,lx+x]$. Thus $I$ is not $x$-good
implies that all vertices in $I$ have at least one people edge with
$I_{x}$, in other words $\sum_{j\in I_x} \mathbh{1}_{\{ i\ \mathrm{has\
a\ people\ edge\ with}\ j\}} \ge1$ for all $i\in I$, and thus
\[
\sum_{i\in I} \sum_{j\in I_x}
\mathbh{1}_{\{ i\ \mathrm{has\ a\ people\ edge\ with}\ j\}}\ge lx.
\]
The number of distinct pairs of vertices between $I$ and
$I_{x}\setminus I$ is $2lx^2$, and the number of distinct pairs of
vertices within $I$ is ${lx\choose2}$. Therefore
\[
\sum_{i\in I} \sum_{j\in I_x}
\mathbh{1}_{\{ i\ \mathrm{has\ a\ people\ edge\ with}\ j\}} \stackrel{\mathrm{d}} {=}X+2Y,
\]
where $X\sim\operatorname{Bin}(2lx^{2},p), Y\sim\operatorname
{Bin}({lx\choose2},p)$ and $X,Y$ are independent. In particular, we have
\begin{eqnarray*}
\mathbb{P} (I \mbox{ is not $x$-good} )&\le&\mathbb{P}
(X+2Y\ge lx )
\\
&\le&\mathbb{P} \bigl(X+2Y'\ge lx \bigr) \le
e^{-\theta lx}\mathbb{E} \bigl(e^{\theta X+2\theta Y'}\bigr)
\end{eqnarray*}
for any $\theta>0$, where $Y'\sim\operatorname{Bin}(l^2x^2/2,p)$ is
independent of $X$. We have
\begin{eqnarray} \label{eqbd1}
\mathbb{P} \bigl(X+2Y'\ge lx \bigr) &\le&
e^{-\theta lx} \bigl(1-p+pe^{\theta}\bigr)^{2lx^2}
\bigl(1-p+pe^{2\theta
}\bigr)^{l^2x^2/2}
\nonumber
\nonumber\\[-8pt]\\[-8pt]
&\le&\exp\bigl[ -lx\bigl(\theta-2t\bigl(e^{\theta}-1\bigr)-lt
\bigl(e^{2\theta}-1\bigr)/2\bigr)\bigr],\nonumber
\end{eqnarray}
where $t:=px$. Note that we have
\[
\frac{\mathbb{E} (X+2Y')}{lx}= (l+2)px=(l+2)t.
\]
Hence, under the assumption $t\in(0,1/(l+2))$, we have $lx>\mathbb
{E} (X+2Y')$ and $\sqrt{1+l/t}-1>l$. Taking $\theta=\log[(\sqrt
{1+l/t}-1)/l]$ in (\ref{eqbd1}), we finally have
\begin{eqnarray*}
\mathbb{P} (I \mbox{ is not $x$-good} )
&\le&\exp \biggl[-\frac{t}{2p} \bigl(2l\log(\sqrt{1+{l}/{t}}-1) +
\bigl(l^2+4l+2\bigr)t
\\
&&\hspace*{92pt}{} - 2t\sqrt{1+{l}/{t}}-2l\log l -l \bigr) \biggr].
\end{eqnarray*}

This completes the proof.
\end{pf*}

Propositions~\ref{ubd-ring} and~\ref{teoring-lower-bound} give
Theorem~\ref{teo-ring}.

\subsection{Lower bound for puzzles with bounded degree} \label{secd>1lowerbound}
In this section, we prove the lower bound in Theorem~\ref{teo-d>1} for
arbitrary puzzle graphs with bounded degree as $n\to\infty$.

%
\begin{proposition}
\label{lbd-d>1}
For any sequence of connected puzzle graphs with boun\-ded maximum degree
as $\llvert V\rrvert =n\to\infty$, $p_c(n) = \omega(1/n^b)$ for any
$b > 0$.
\end{proposition}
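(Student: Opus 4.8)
The plan is to prove the stronger statement that if $p_n = O(1/n^b)$ then, with high probability, the jigsaw process never creates a cluster larger than a constant $s = s(b,\Delta)$, where $\Delta$ bounds the maximum degree of the puzzle graph. Since $\clusters_\infty = \{V\}$ requires a cluster of size $n > s$, this gives $\probsub{\solve}{p_n}\to 0$ whenever $p_n = O(1/n^b)$, hence $p_c(n) > n^{-b}$ for all large $n$; as $b>0$ is arbitrary, $p_c(n) = \omega(1/n^b)$. As a first ingredient I would record two structural facts, proved together by induction on the step index $i$: every jigsaw cluster is a connected subset of the puzzle graph, and every jigsaw cluster is a connected subset of the people graph. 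Both hold for $\clusters_1$, whose clusters are the connected components of $(V,\Epeople\cap\Epuzzle)$, a subgraph of each of the two graphs. For the step $i\to i+1$, a cluster of $\clusters_{i+1}$ has the form $\bigcup_{U\in A}U$ with $A$ a connected component of $(\clusters_i,\mathcal{E}_i)$, and each edge of $\mathcal{E}_i$ certifies both a puzzle edge and a people edge between the clusters it joins, so traversing a spanning tree of $A$ shows the union inherits connectedness in both graphs.

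The key reduction is the following. Suppose some cluster of size $\ge s$ ever appears, and consider the first step $i+1$ at which this happens, so every cluster of $\clusters_i$ has size $< s$ and the new cluster equals $\bigcup_{U\in A}U$ for a component $A$ of $(\clusters_i,\mathcal{E}_i)$ with $\sum_{U\in A}|U| \ge s$. Growing a spanning tree of $A$ in breadth-first order and stopping the first time the accumulated vertex count reaches $s$ produces a subtree $A'$ with $s \le \sum_{U\in A'}|U| < 2s$ (the last cluster added has size $< s$). Put $S' := \bigcup_{U\in A'}U$. By the structural facts, and because the tree edges of $A'$ lie in $\mathcal{E}_i$ and therefore supply a puzzle edge and a people edge---both internal to $S'$---between consecutive clusters, $S'$ is connected in the puzzle graph and the people graph induced on $S'$ is connected, with $s \le |S'| < 2s$. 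Hence $\{\solve\}$ is contained in the event $\mathcal{A}_s$ that there exists $S'\subseteq V$ with $s \le |S'| < 2s$ that is puzzle-connected and whose induced people subgraph is connected.

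It then remains to bound $\prob{\mathcal{A}_s}$ by a union bound. The number of puzzle-connected subsets of $V$ of size $s'$ is at most $n(e\Delta)^{s'}$, since in a graph of maximum degree $\Delta$ a connected subset of size $s'$ containing a prescribed vertex can be built in at most $(e\Delta)^{s'}$ ways. For a fixed $S'$ of size $s'$, the induced people graph on $S'$ is connected only if it contains one of the $(s')^{s'-2}$ spanning trees of the complete graph on $S'$, each present with probability $p_n^{s'-1}$, so this probability is at most $(s')^{s'-2}p_n^{s'-1}$. Using $p_n \le Cn^{-b}$ and summing,
\[
\prob{\mathcal{A}_s}\;\le\;\sum_{s'=s}^{2s-1} n\,(e\Delta)^{s'}\,(s')^{s'-2}\,(Cn^{-b})^{s'-1},
\]
and each of the constantly many terms is at most a constant depending on $b,\Delta,C$ times $n^{1-b(s-1)}$. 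Choosing $s = s(b,\Delta)$ with $b(s-1)>1$---for instance $s = \lceil 2 + 1/b\rceil$---makes the sum tend to $0$, so $\probsub{\solve}{p_n} \le \prob{\mathcal{A}_s} \to 0$, which proves the proposition.

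The arithmetic here is routine; I expect the only delicate point to be the reduction step, namely truncating the merging component to obtain a witness set $S'$ of \emph{bounded} size (so that the union bound over $|S'|$ converges) while keeping $S'$ connected in the \emph{induced} people subgraph, not merely connected via people edges that might leave $S'$. The bounded-degree hypothesis enters exactly once, to replace the $\binom{n}{s'}$ of a naive union bound by $n(e\Delta)^{s'}$; the star-graph puzzle of Remark~\ref{rmk-connected-people} shows this is essential, since there $p_c(n)=\Theta((\log n)/n) = o(1/n^b)$ for every $b<1$, so the conclusion fails without a degree bound.
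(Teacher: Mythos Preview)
Your argument is correct, and it takes a genuinely different route from the paper. The paper argues dynamically: for $p=n^{-b}$ with $b\in(1/k,1/(k-1))$ it tracks, stage by stage, the number of ``active'' clusters $|\mathcal{A}_i|$ and the maximum cluster size, proving inductively that after $k$ stages both are bounded by constants, so no further merge ever occurs. Your argument is static: you observe (by an easy induction) that every jigsaw cluster is simultaneously puzzle-connected and people-connected in the \emph{induced} subgraphs, then truncate the first oversized cluster to a witness $S'$ with $|S'|\in[s,2s)$ that is connected in both induced graphs, and finish with a union bound using the lattice-animal count $n(e\Delta)^{s'}$ for puzzle-connected sets and the Cayley bound $(s')^{s'-2}p^{s'-1}$ for people-connectedness. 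This is shorter and more elementary; the paper's approach, by contrast, yields the additional information that the process freezes after at most $\lceil 1/b\rceil$ stages. Your observation that clusters remain connected in the induced people graph (not merely people-adjacent through external vertices) is the key point that makes the one-shot union bound work, and you handle it correctly via the tree edges of $A'$ lying in $\mathcal{E}_i$.
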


\begin{pf}
Let $p = n^{-b}$ such that $k\geq2$ and $b\in(\frac{1}{k},\frac
{1}{k-1})$ are fixed, and suppose that the maximum degree of
$(V,E_{\mathrm{puzzle}})$ is at most $D$ for all $n$. After stage $i$
we have a collection of jigsaw clusters $\mathcal{C}_i$. Initially
$\mathcal{C}_0 = \{\{v\}\dvtx  v \in V\}$, and after the first stage
$\mathcal{C}_1$ is the set of connected components in the graph
$(V,E_{\mathrm{people}}\cap E_{\mathrm{puzzle}})$. Thereafter, two
clusters $U,U'\in\mathcal{C}_i$ merge if there is an edge between the
two clusters in $E_{\mathrm{people}}$ and an edge between the two
clusters in $E_{\mathrm{puzzle}}$. Therefore, if $U, U' \in\mathcal
{C}_i$, then $U, U' \subset W \in\mathcal{C}_{i+1}$ if and only if
there is some nonnegative integer~$\ell$ and a sequence of clusters
$U= U_0, U_1, \ldots, U_{\ell} = U' \in\mathcal{C}_i$ such that
$U_j$ merges with $U_{j+1}$ at stage $i+1$.

Observe that for $i\geq1$, every merge event in stage $i+1$ must
involve at least one cluster that was formed by a merge in stage $i$.
Inspired by this observation, we let $\mathcal{A}_i \subseteq\mathcal
{C}_i$ be the set of \textit{active} clusters that were the result of at
least one merge in stage $i$ when $i\geq1$, and let $\mathcal{A}_0 =
\mathcal{C}_0$. Next we define the events $E_i$ and $F_i$ for $i=0,
\ldots, k$ as
\begin{eqnarray*}
E_i &=& \bigl\{\llvert \mathcal{A}_i\rrvert \geq
C_i n^{1-ib} \bigr\},
\\
F_i &=& \bigl\{ \max\bigl\{\llvert W\rrvert\dvtx  W \in
\mathcal{C}_i\bigr\} \geq L_i \bigr\},
\end{eqnarray*}
where $C_i$ and $L_i$ are constants that depend on $d$ and $k$, which
we will define later. In words, $E_i$ is the event that there are at
least $C_i n^{1-ib}$ active clusters following stage $i$, which is
contained in the event that at least $C_i n^{1-ib}$ merges occur at
stage $i$, because each active cluster must be the result of at least
one merge. $F_i$ is the event that the largest cluster following stage
$i$ has at least $L_i$ vertices. For sufficiently large $n$, the event
$E_k$ is equivalent to the event that at least one merge occurs at
stage $k$, because $kb > 1$. Therefore, our goal is to show that $
\mathbb{P} (E_k ) \to0$ and $\mathbb{P} (F_k ) \to0$ as $n\to\infty$, which implies
that no merges occur after stage $k$ and that the largest cluster has
size at most $L_k$, so the puzzle remains unsolved.

Our strategy is to prove this by induction on $i$. It is trivially true
that $\mathbb{P} (E_0 ) = 0$ and $\mathbb{P} (F_0 ) = 0$ with $C_0 = 2$ and $L_0 = 2$. Now,
let us assume that $\mathbb{P} (E_i )\to0$ and $\mathbb{P} (F_i )\to0$ as $n\to\infty$ for some $i \in\{
0,1, \ldots, k-1\}$, which implies that $\mathbb{P} (E_i^c \cap F_i^c ) \to1$. On the event
$E_i^c\cap F_i^c$, we know that the number of active clusters is $\llvert \mathcal{A}_i\rrvert  < C_i n^{1-ib}$, and the largest cluster has at
most $L_i$ vertices. The latter implies that every cluster has fewer
than $DL_i$ neighboring clusters in $(V,E_{\mathrm{puzzle}})$ because
each vertex has at most $D$ total neighboring vertices in the puzzle
graph. We will use this fact in two ways. First, we will show that the
number of merges at stage \mbox{$i+1$} is small because each active cluster
after stage $i$ has relatively few opportunities to merge. Second, we
will show that no path of neighboring clusters longer than length $k-i$
merge at stage $i+1$ because few such paths exist.

To meet our first goal, we define a random variable $I_{\{A,B\}}^{i+1}$
for\vspace*{1pt} each pair of an active cluster $A\in\mathcal{A}_i$ and a
neighboring cluster $B \in\mathcal{C}_i$ such that $B\neq A$, and
there is an edge in $E_{\mathrm{puzzle}}$ between $A$ and $B$. The
random variable $I_{\{A,B\}}^{i+1}$ is the\vspace*{1pt} indicator of the event that
$A$ and $B$ merge at stage $i+1$. On the event $F_i^c$, the probability
that $A$ merges with $B$ is at most
%
\begin{equation}
\label{merge-prob} 1 - \bigl(1 - n^{-b} \bigr)^{(DL_i)^2} \leq1 -
\bigl(1-(DL_i)^2 n^{-b}\bigr) =
(DL_i)^2 n^{-b},
\end{equation}
where 
we use the fact that $(1-x)^n\ge1-nx$ for $x\in(0,1)$. For
convenience, we now order the clusters in $\mathcal{C}_i$ so that
$A_1, A_2, \ldots, A_{\llvert \mathcal{A}_i\rrvert } \in\mathcal
{A}_i$ and $A_{\llvert \mathcal{A}_i\rrvert  + 1}, A_{\llvert \mathcal
{A}_i\rrvert  + 2}, \ldots, A_{\llvert \mathcal{C}_i\rrvert } \in
\mathcal{C}_i \setminus\mathcal{A}_i$. Therefore, on $E_i^c \cap
F_i^c$, the total number of merges that occur in stage $i+1$,
\[
\sum_{j= 1}^{\llvert \mathcal{A}_i\rrvert } \sum
_{\ell= j+1}^{\llvert \mathcal{C}_i\rrvert } I_{\{A_j, A_{\ell}\}}^{i+1},
\]
is stochastically dominated by $X_i \sim$~Binomial$(DL_i C_i n^{1-ib},
(DL_i)^2 n^{-b})$. This is because there are at most $DL_i C_i
n^{1-ib}$ distinct pairs of neighboring clusters, at least one of which
is active, and the events that each of these pairs merges at stage
$i+1$ are independent because they depend on disjoint sets of edges in
the people graph. If we let $C_{i+1} = 2(DL_i)^3 C_i$ (this is $2
\mathbb{E} X_i / n^{1-(i+1)b}$), then by Chebyshev's inequality
\begin{eqnarray*}
\mathbb{P} \bigl(E_{i+1} | E_i^c
\cap F_i^c \bigr) &=&
\mathbb{P} \Biggl(\sum
_{j= 1}^{\llvert \mathcal{A}_i\rrvert } \sum
_{\ell= j+1}^{\llvert \mathcal{C}_i\rrvert } I_{\{A_j, A_{\ell}\}
}^{i+1} \geq
C_{i+1} n^{1-(i+1)b} \bigg| E_i^c \cap
F_i^c \Biggr)
\\
&\leq&
\mathbb{P} \bigl(X_i \geq C_{i+1}
n^{1-(i+1)b} \bigr)
\\
&=&
\mathbb{P} (X_i - \mathbb{E} X_i \geq
\mathbb{E} X_i )
\\
&\leq& (\mathbb{E} X_i)^{-1} = O\bigl(n^{-1 + (i+1)b}
\bigr) \to0.
\end{eqnarray*}
Since $\mathbb{P} (E_i^c \cap F_i^c ) \to1$, we have that $
\mathbb{P} (E_{i+1} ) \to0$.

Next we must show that the largest cluster after stage $i+1$ has size
at most $L_{i+1}$. Define a \textit{cluster path} of length $\ell\geq0$
between $U,U' \in\mathcal{C}_i$ to be a sequence of distinct clusters
$U = U_0, U_1, \ldots, U_{\ell} = U' \in\mathcal{C}_i$ such that
$U_j$ and $U_{j+1}$ are puzzle-adjacent for all $j \in\{0, \ldots,
\ell-1\}$. For a fixed cluster $A \in\mathcal{C}_i$, let $Y^i_A$
denote the number of cluster paths of length $k$ that start at $A$
(meaning that $U_0=A$) and such that $U_j$ will merge with $U_{j+1}$ at
stage $i+1$ for each $j \in\{0, \ldots, k-1\}$. For any cluster path
$U_0, \ldots, U_k$, the probability that $U_j$ and $U_{j+1}$ merge at
stage $i+1$ is bounded above by $(DL_i)^2 n^{-b}$ on the event $F_i^c$,
by inequality~(\ref{merge-prob}). The number of cluster paths of
length $k$ in after stage $i$ that start at $A$ is bounded by
$(DL_i)^k$ on $F_i^c$ because each cluster has at most $DL_i$
neighboring clusters. Therefore, by Markov's inequality,
\begin{eqnarray*}
\mathbb{P} \biggl(\sum_{A\in\mathcal{C}_i}
Y^i_A \geq1 \Big| F_i^c
\biggr) &\leq& n\mathbb{P} \bigl(Y^i_A \geq1
| F_i^c \bigr)
\\
&\leq& n \bigl[(DL_i)^k \bigl((DL_i)^2
n^{-b} \bigr)^k \bigr] = O\bigl(n^{1-kb}\bigr)
\to0.
\end{eqnarray*}
This implies that there are no cluster paths of length $k$ or longer
that merge at stage $i+1$. Note that clustering can occur in any
tree-like pattern, and the maximum size of a rooted tree with depth
(maximum distance from the root) $k$ and maximum degree $DL_{i}$ is
$L_i (1 + (DL_i)^1 + (DL_i)^2 + \cdots+ (DL_i)^k) = L_i
((DL_i)^{k+1}-1)/(DL_i-1)$.

In\vspace*{1pt} turn, this implies that the largest cluster after stage $i+1$ is
smaller than $L_{i+1}:= L_i ((DL_i)^{k+1}-1)/(DL_i-1)$ with high
probability on the event $F_i^c$, so $\mathbb{P} (F_{i+1} ) \to0$, which completes the proof.
\end{pf}

Propositions~\ref{ubd-ring} and~\ref{lbd-d>1} give Theorem~\ref{teo-d>1}.

\section{People graphs with limiting power-law degree distributions}\label{power-law}
In this section, we prove Proposition~\ref{jigsawwithCM}, which
states that a configuration model random people graph with limiting
power-law degree distribution having exponent $\alpha>2$ \mbox{cannot}
solve bounded-degree puzzles with high probability. Recall that a
set $U\subseteq V$ is \emph{internally solved} if the people graph
induced on $U$ can solve the puzzle graph induced on $U$. We will
call this event $\mbox{\texttt{Solve}}_U$. The idea is to show that
with high
probability no set of vertices of a certain, finite size is
internally solved.

%
\begin{lemma}\label{solvesubsetlemma}
Suppose $U\subseteq V$ such that $\llvert U\rrvert  = m>1 + \frac
{2\alpha}{\alpha- 2}$ is constant. Then
\[
\mathbb{P} (\emph{\mbox{\texttt{Solve}}}_U ) = o
\bigl(n^{-1}\bigr).
\]
\end{lemma}

\begin{pf}
Without loss of generality, suppose that $U=[m]$. Fix $\gamma:=\alpha
/2\in(1,\alpha-1)$ and $\varepsilon:=1/2-1/\alpha$, so that
$(1-\varepsilon)\gamma>1$. It is easy to see that $\mathbb{E}
d_1^\gamma<\infty$. Define the event
\[
\mathcal{D}_{n,m}:=\bigl\{ \mbox{there exists a pair of indices $1\le
i<j\le m$, such that $d_i d_j \geq n^{1-\varepsilon}$}
\bigr\}.
\]
By union bound and Markov's inequality, we have
%
\begin{equation}
\label{degbound} \mathbb{P} (\mathcal{D}_{n,m} )\leq \pmatrix{m
\cr 2}\mathbb{P} \bigl(d_1d_2\ge
n^{1-\varepsilon} \bigr) \le \pmatrix{m\cr 2} \frac{\mathbb{E} (d_1^{\gamma})\mathbb{E}
(d_2^{\gamma})}{n^{(1-\varepsilon)\gamma}}=o\bigl(n^{-1}
\bigr).
\end{equation}

Observe that the event $\mbox{\texttt{Solve}}_U$ implies that the
people graph induced by $U$ is connected, which in turn implies that it
contains at least $m-1$ (nonloop) edges. Partitioning on $\mathcal
{D}_{n,m}$, we have
%
\begin{eqnarray}\label{edgebound1}
&& \mathbb{P} (\mbox{\texttt{Solve}}_U )
\nonumber\\[-8pt]\\[-8pt]
&&\qquad \leq\mathbb{P} (\mathcal{D}_{n,m} ) +\mathbb{P}
\bigl(E_{\mathrm{people}}|_{U}\mbox{ has } \geq m-1 \mbox{ nonloop
edges}, \mathcal{D}_{n,m}^c \bigr).\nonumber
\end{eqnarray}

Let $\mathcal{F}_\vk:= \{d_1=k_1, \ldots, d_m=k_m\}$ be the event
that the
degrees of the vertices in $U$ are $\vk:=(k_1, \ldots, k_m)$. On the
event $\mathcal{F}_\vk$, label the half-edges at vertex $u \in U$ as $(u,1),
(u,2), \ldots, (u,k_u)$. Let
%
\begin{eqnarray}
\mathcal{E} = \mathcal{E}(\vk)&&\mbox{denote the set of all pairs of
half-edges } \bigl\{(u,\ell_u), (v, \ell_v)\bigr\}
\nonumber\\[-8pt]\\[-8pt]\nonumber
&&\mbox{such that } 1\leq u<v\leq m, 1\leq\ell_u\leq
k_u\mbox{ and }1\leq\ell_v\leq
k_v.
\end{eqnarray}
Note that $\mathcal{E}$ does not contain any pairs of half-edges that
would form a self-loop if joined.

Conditional on $\mathcal{F}_\vk$, for each $e\in\mathcal{E}$, let
$Y_{e}$ be the indicator that the half-edges in $e$ are matched in the
construction of the configuration model graph, so $E_{\mathrm
{people}}$ contains an edge between the vertices of $e$. The number of
nonloop people edges between vertices of $U$ is then $X_m = \sum_{e\in\mathcal{E}} Y_e$. By Markov's inequality, the probability of
$\{X_m\geq m-1\}$ given $\mathcal{F}_{\vk}$ is at most the expected
number of subsets of~$\mathcal{E}$ with size $m-1$ such that all
half-edge pairs in the subset get matched in the construction of the
configuration model graph. Therefore,
\[
\mathbb{P} (X_m \geq m-1 \mid\mathcal{F}_\vk
) \leq \llvert \mathcal{E}\rrvert ^{m-1} \max\mathbb{P}
(Y_{e_1}= \cdots= Y_{e_{m-1}} =1\mid\mathcal{F}_\vk
),
\]
where the maximum is taken over all subsets of size $m-1$ of
$\mathcal{E}$. If $\mathcal{F}_\vk\subseteq\mathcal{D}_{n,m}^c$,
then on the
event $\mathcal{F}_\vk$,
\[
\llvert \mathcal{E}\rrvert = \sum_{1\leq u<v\leq m}
k_uk_v \leq m^2 n^{1-\varepsilon}.
\]
For any fixed set of half-edge pairs, $e_1, \ldots, e_{m-1}\in
\mathcal{E}$, we consider the probability of matching each of these
pairs sequentially in the configuration model. Since $d_{\min}\geq3$,
each vertex outside of $U$ has at least $3$ half-edges, so each
half-edge among the first $2(m-1)$ that get matched have at least
$3(n-m) - 2(m-1) \geq n$ (for large $n$) choices for half-edges to get
matched with. Therefore,
\[
\mathbb{P} (Y_{e_1}= \cdots= Y_{e_{m-1}} =1\mid
\mathcal{F}_\vk ) \leq \biggl(\frac{1}{n} \biggr)^{m-1}.
\]
The last three displays imply that
\[
\mathbb{P} (X_m \geq m-1 \mid\mathcal{F}_\vk
) \leq m^{2m} n^{-\varepsilon(m-1)},
\]
provided $\mathcal{F}_\vk\subseteq\mathcal{D}_{n,m}^c$. Therefore,
%
\begin{eqnarray}
\label{edgebound2}
&& \mathbb{P} \bigl(E_{\mathrm{people}}|_{U}
\mbox{ has } \geq m-1\mbox{ nonloop edges, }\mathcal{D}_{n,m}^c
\bigr)
\nonumber\\[-8pt]\\[-8pt]\nonumber
&&\qquad= \sum_{\vk\dvtx  \mathcal{F}_\vk\subseteq\mathcal{D}_{n,m}^c} \mathbb{P} (
\mathcal{F}_\vk )\mathbb{P} (X_m\geq m-1\mid
\mathcal{F}_\vk ) \leq m^{2m} n^{-\varepsilon(m-1)}.
\end{eqnarray}

Choosing $m$ such that $\varepsilon(m-1) > 1$, and combining equations
(\ref{degbound}), (\ref{edgebound1}) and~(\ref{edgebound2}) show
that $\displaystyle\mathbb{P} (\mbox{\texttt{Solve}}_U ) = o(n^{-1})$.
\end{pf}

Finally we are ready to complete the proof of Proposition~\ref
{jigsawwithCM}. First, observe that the jigsaw percolation process
can be slowed down, such that at every step only a single pair of
clusters is merged. The final set of clusters after all possible merges
are made will be the same as in the original formulation, but in the
slowed down version, the size of the largest cluster can at most double
at each step. This means that for any $k \leq n/2$,
%
\begin{equation}
\label{solvebound} \mathbb{P} (\mbox{\texttt{Solve}} ) \leq\mathbb{P}
\biggl(\bigcup_{m\in[k,2k]} \bigcup
_{U\subset V, \llvert U\rrvert =m} \mbox {\texttt{Solve}}_U \biggr).
\end{equation}
Furthermore, observe that the second union on the right-hand side can
be restricted to only those subsets $U\subset V$ that are connected in
$(V,E_{\mathrm{puzzle}})$. The number of connected subsets of vertices
in $(V,E_{\mathrm{puzzle}})$ of size $m$ is crudely bounded above by
$n \cdot(m-1)! D^{m-1}$. This bound is obtained by building a
connected set $U$ of size $m$ by first choosing a starting vertex $v$,
in $n$ ways, then adding one vertex at a time to $U$ until $U$ contains
$m$ vertices. When $U$ contains $\ell$ vertices, there are at most
$\ell D$ vertices that are adjacent to a vertex in $U$ that can be
added in the next step. If we fix
$k
> 1 + \frac{2\alpha}{\alpha-2}$, then~(\ref{solvebound}) and
Lemma~\ref{solvesubsetlemma} imply that
\begin{eqnarray*}
\mathbb{P} (\mbox{\texttt{Solve}} ) & \leq&(k+1) (2k)!
D^{2k}\cdot n \cdot\max_{m\in[k,2k]} \max
_{U\subset V, \llvert U\rrvert  = m}\mathbb{P} (\mbox{\texttt{Solve}}_U
) = o(1).
\end{eqnarray*}

\section{Discussion and future directions} \label{discussion}
In our early attempts to understand jigsaw percolation on the ring
graph, we tried to use simulations to inform our conjectures about the
critical value $p_c(n)$ [Figure~\ref{figPSolven1000}(a)]. However, as
with bootstrap \mbox{percolation}~\cite{GH2008}, we expect a slow rate of
convergence to the critical value.

%
\begin{figure}[t]
\begin{tabular}{@{}c@{\hspace*{5pt}}c@{}}

\includegraphics{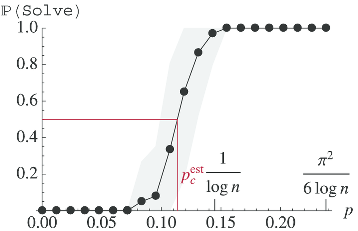}  & \includegraphics{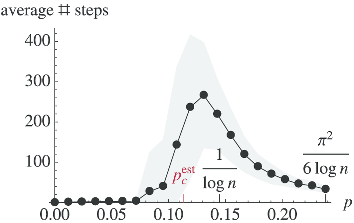}\\
\footnotesize{(a) Fraction of trials in which the people graph} & \footnotesize{(b) Average number of steps before the}\\[-1.5pt]
\footnotesize{solves the $n=1000$ ring puzzle} & \footnotesize{process stops}
\end{tabular}
\caption{Simulations of jigsaw percolation on a ring of size $n=1000$,
with 200 trials for 21 equally spaced values of $p \in[0, 1.05 \times
\pi^2/(6 \log n)]$ (which took 57 days on a department server). Dots
are averages of 200 trials, while shaded gray areas denote $\pm1$
standard deviation. The estimated critical value $p_c^{\mathrm{est}}
\approx0.11$, denoted in red, is obtained by fitting a line between
the two data points with $\mathbb{P}_{p} (\mbox{\texttt
{Solve}} )$ just below and above $1/2$. Characterizing the
average number of time steps before the process terminates
\textup{(b)} remains an open question.}\label{figPSolven1000}\label{figsteps}
\end{figure}

%
\begin{conjecture}
For jigsaw percolation on the ring puzzle graph with an Erd\H{o}s--R\'
{e}nyi people graph, there exist constants $b>0$, $c_1>0$ and $c_2$
such that
\[
p_c(n) = \frac{c_1}{\log n} + \frac{c_2}{(\log n)^{1+b}} + o \bigl((\log
n)^{-1-b} \bigr).
\]
\end{conjecture}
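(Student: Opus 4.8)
The conjecture strengthens Theorem~\ref{thm-ring} in two ways: it claims that $p_c(n)$ has a genuine leading term $c_1/\log n$ and that the next term is polynomially smaller in $\log n$. Accepting the belief recorded after Theorem~\ref{thm-ring} that $c_1=\pi^2/6$, the plan is first to prove the matching lower bound $p_c(n)\ge\tfrac{\pi^2}{6\log n}(1-o(1))$ and then to push both bounds to second order. I would start from a structural simplification of the ring process: for the $n$-cycle puzzle every jigsaw cluster is a contiguous arc of $\Zn$ at every stage, since two arcs are puzzle-adjacent exactly when they are consecutive, each step merges consecutive arcs sharing a people edge, and the union of consecutive arcs is again an arc. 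Thus the dynamics is a fragmentation-free one-dimensional coalescence, and $\probsub{\solve}{p_n}$ is the probability it terminates in a single arc. Writing $\rho=\rho(n,p_n)$ for the probability that a fixed location $c$ is a \emph{cut} of $\clusters_\infty$ (that is, $c$ and $c+1$ lie in distinct terminal arcs), a union bound gives $\probsub{\{\solve\text{ fails}\}}{p_n}\le n\rho$, while a second-moment/anticoncentration bound on the number of cuts --- which are local events, hence approximately independent at distances $\gg 1/\epsilon_n$ --- shows $\probsub{\solve}{p_n}\to 0$ once $n\rho$ exceeds a power of $\log n$. Everything thus reduces to estimating $\rho$ up to sub-polynomial-in-$n$ factors.

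For the leading constant, the upper estimate $\rho\le e^{-(\pi^2/6+o(1))/\epsilon_n}$ is essentially Proposition~\ref{ubd-ring}: a cut near $c$ precludes a solved block near $c$, whose probability Lemmas~\ref{suffcond} and~\ref{num est} control. The real work is the matching lower estimate $\rho\ge e^{-(\pi^2/6+o(1))/\epsilon_n}$, for which the $x$-good obstruction of Lemma~\ref{lem-x-good} is far too expensive --- a single isolated vertex must avoid an entire $\Theta(1/\epsilon_n)$-neighbourhood, and the exhibited rate $1/27$ is nowhere near $\pi^2/6$. Instead I would build a robust obstruction on a window $W$ of length $\Theta(1/\epsilon_n)$ dual to the ladder of Figure~\ref{fig:upperbound}: demand that some interior segment of $W$ has the property that no people edge ever tunnels across it, regardless of how the arcs outside $W$ grow. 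This is a decreasing event depending only on people edges inside a bounded dilation of $W$, so disjoint windows give negatively correlated events; one must show its probability is $e^{-(\pi^2/6+o(1))/\epsilon_n}$, i.e.\ that the cheapest arc-pinching costs the same exponential rate as the cheapest arc-solving, which should follow because both are governed by the same extremal ladder-type configuration analyzed as in Lemma~\ref{num est}. Unioning over $\sim n\epsilon_n/\log n$ disjoint windows then forces a cut whenever $\pi^2/(6\epsilon_n)<\log n-\omega(\log\log n)$, i.e.\ $p_n<\tfrac{\pi^2}{6\log n}(1-o(1))$, completing $p_c(n)=\tfrac{\pi^2}{6\log n}(1+o(1))$.

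For the second-order term, once $c_1=\pi^2/6$ is fixed one sharpens the per-window solve probability $\prod_{j\ge1}(1-(1-p_n)^j)$ beyond the crude $e^{-\pi^2/(6\epsilon_n)}$ of Lemma~\ref{num est} using the Euler-function (Dedekind-$\eta$) asymptotic $\prod_{j\ge1}(1-e^{-j\epsilon})=\sqrt{\epsilon/(2\pi)}\,e^{-\pi^2/(6\epsilon)+\epsilon/24}(1+o(1))$, accounts for the multiplicity of solving modes noted after~\eqref{eq:logpr-bd} (the strict inclusion $\overline{B_1}\subsetneq B_1$), and converts between $p_n$ and $\epsilon_n=p_n+p_n^2/2+\cdots$ (which by itself feeds a term of order $(\log n)^{-2}$ into $p_c$). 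A critical-window analysis --- a second-moment estimate over all $\sim n$ windows to locate the threshold for the appearance of one solved window, matched against the obstruction above --- then pins down $\epsilon_n^\star$ from a relation of the form $\tfrac{n}{\mathrm{poly}(\log n)}\,e^{-\pi^2/(6\epsilon_n)}\asymp 1$, hence $\pi^2/(6\epsilon_n^\star)=\log n-\Theta(\log\log n)$ and a correction to $p_c(n)$ of the conjectured shape $c_2(\log n)^{-1-b}$; the exact exponent $b$ and constant $c_2$ (and whether a $\log\log n$ factor ultimately survives or cancels against the $\sqrt{\epsilon}$ prefactor and the solving-multiplicity) are precisely what this fine bookkeeping yields.

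The main obstacle is the lower estimate on $\rho$ in the leading-constant step: the gap between the sufficient condition's rate $1/27$ and the truth $\pi^2/6$ is exactly the gap between a convenient obstruction and the genuinely cheapest one, and closing it demands identifying the extremal arc-pinching configuration, which is the combinatorial dual of the cheapest way to solve an arc --- itself understood only through the ladder bound. A secondary obstacle is the critical-window second moment in the last step: solve-events on overlapping windows are strongly positively correlated through shared edges, so bounding $\E[(\#\text{solved windows})^2]$ requires a careful decomposition by overlap length, and the precise values of $b$ and $c_2$ are sensitive to how these correlations are handled.
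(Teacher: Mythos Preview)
The statement is a \emph{conjecture} in the paper, not a theorem: the paper offers no proof, only the heuristic remark (attached to the next conjecture) that a two-sided version of the ladder argument reproduces the upper bound $\pi^2/(6\log n)$ with a correction of order $(\log n)^{-3/2}$. There is therefore no paper proof to compare your proposal against; it has to stand on its own.

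As a research program your outline is sensible, and the observation that ring clusters are always contiguous arcs is correct and genuinely useful. But, as you yourself flag, it is not a proof. The central missing piece is the lower estimate $\rho\ge e^{-(\pi^2/6+o(1))/\epsilon_n}$: this requires exhibiting an obstruction whose exponential cost matches that of the ladder. You assert that the ``cheapest arc-pinching'' has the same rate as the cheapest arc-solving because both are ``governed by the same extremal ladder-type configuration'', but you neither construct the obstruction nor prove any such duality --- and that statement \emph{is} the conjecture at leading order. The paper's $x$-good obstruction of Lemma~\ref{lem-x-good} is off from $\pi^2/6$ by a factor of roughly $44$; nothing in your proposal closes that gap.

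A smaller but real issue: you treat cut events as ``local'' and ``approximately independent at distances $\gg 1/\epsilon_n$''. This is not immediate. Whether $c$ is a terminal cut depends on the full extent of the two arcs abutting $c$, and those arcs can in principle be long; the locality you want is itself a consequence of arc-size control near criticality, which is part of what you are trying to establish. The second-moment step therefore needs care to avoid circularity.
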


If true, this means that estimating $c_1$ to within $1\%$ via
simulation would require taking $n$ to be at least $\exp
[(100c_2/c_1)^{1/b}]$, which is prohibitively large if $\llvert c_2/c_1\rrvert $ is much larger than~$0.1$, and $b$ is at most $1$.
However, we expect our upper bound on $p_c(n)$ to be tight for the ring graph.

\begin{conjecture}
For jigsaw percolation on the ring puzzle graph, $c_1 = \pi^2 / 6$.
\end{conjecture}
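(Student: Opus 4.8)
The statement $c_1=\pi^2/6$ says that the upper bound $\pi^2/(6\log n)$ of Theorem~\ref{thm-ring} is tight; combined with Proposition~\ref{ubd-ring} it is equivalent to: \emph{for $p_n=\lambda/\log n$ with $\lambda<\pi^2/6$ one has $\probsub{\solve}{p_n}\to 0$} (and by monotonicity of $\probsub{\solve}{\cdot}$ in $p$ it suffices to treat $\lambda$ arbitrarily close to $\pi^2/6$). The plan is to prove this by exhibiting, with probability $\to1$, a single cut edge of the $n$-cycle across which the two sides never merge. The structural observation is that on the cycle every jigsaw cluster is an arc, and whenever an arc $J$ occurs as a cluster at \emph{any} stage of the process, the people graph induced on $J$ already solves the path puzzle on $J$ (a restriction/monotonicity argument of exactly the flavour behind Lemma~\ref{suffcond}); call such an arc \emph{internally solved}. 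Fix the cut pair $\{0,1\}$, write $[a,b]=\{a,a{+}1,\dots,b\}\subseteq\Zn$ for an arc, and set
\[
S_L=\{\ell\ge 0:[-\ell,0]\text{ internally solved}\},\quad S_R=\{r\ge 1:[1,r]\text{ internally solved}\},\quad \ell^\ast=\max S_L,\quad r^\ast=\max S_R.
\]
Following a vertex's cluster through time and inspecting the \emph{first} stage at which $0$ and $1$ lie in a common cluster, one checks deterministically that $0$ and $1$ end up in the same block of $\clusters_\infty$ \emph{if and only if} there is a people edge joining $[-\ell^\ast,0]$ to $[1,r^\ast]$. Since $\solve$ is contained in that event, it suffices to bound its probability.

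The second step decouples $(\ell^\ast,r^\ast)$ from the cross edges. Because $\ell^\ast$ (resp. $r^\ast$) is a function of the people edges inside $\Zn\setminus\{1\}$ (resp. $\Zn\setminus\{0\}$) only, truncating these maxima at a scale $M\asymp(\log n)(\log\log n)$ produces $\tilde\ell^\ast=\max(S_L\cap[0,M])$ and $\tilde r^\ast=\max(S_R\cap[1,M])$ that depend on disjoint bounded windows of people edges, hence are independent of each other and of the at most $(M{+}1)^2$ cross edges between $[-\tilde\ell^\ast,0]$ and $[1,\tilde r^\ast]$. Writing $q(N,p)$ for the probability that an \ER\ graph $\mathcal G(N,p)$ solves the $N$-vertex path puzzle, a union bound over arcs---splitting on whether the crossing is witnessed by arcs of length $\le M$ (in which case it forces a people edge between $[-\tilde\ell^\ast,0]$ and $[1,\tilde r^\ast]$, which on $\{\tilde\ell^\ast=0,\tilde r^\ast=1\}$ is just the single edge $\{0,1\}$) and, otherwise, on $\{\ell^\ast>M\}\cup\{r^\ast>M\}$---yields
\[
\probsub{\solve}{p_n}\;\le\;\prob{\,0\text{ and }1\text{ in the same block of }\clusters_\infty\,}\;=\;O\!\Bigl(p_n+\sum_{N=2}^{n}q(N,p_n)\Bigr),
\]
using $\prob{\ell^\ast>M}\le\sum_{N>M}q(N,p_n)$ and likewise for $r^\ast$. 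Thus everything reduces to showing $\sum_{N=2}^{n}q(N,p_n)\to 0$ when $\lambda<\pi^2/6$.

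For that I would establish the sharp upper bound
\[
q(N,p)\;\le\;n^{o(1)}\prod_{j=1}^{N-1}\bigl(1-(1-p)^j\bigr)\qquad\text{uniformly in }2\le N\le n,
\]
with the $o(1)$ small enough (the correction at most polylogarithmic) in the range $N\lesssim(\log n)(\log\log n)$. The product on the right is exactly the ``cascade'' probability already used, via the sufficient condition $\overline{B_i}$, in the proof of Proposition~\ref{ubd-ring}, so this says that one-vertex-at-a-time growth is essentially the best a sparse \ER\ graph can do. Granting it, Lemma~\ref{num est} gives $\sum_{N=2}^{n}\prod_{j=1}^{N-1}(1-(1-p_n)^j)\to 0$ for $\lambda<\pi^2/6$: the small-$N$ terms are $p_n,\,2p_n^2,\dots$ and sum to $O(p_n)=O(1/\log n)$, while the $\Theta(n)$ remaining terms have already reached the plateau value $\prod_{j\ge1}(1-(1-p_n)^j)=n^{-\pi^2/(6\lambda)+o(1)}$, so their contribution $n^{\,1-\pi^2/(6\lambda)+o(1)}$ vanishes precisely when $\lambda<\pi^2/6$.

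The main obstacle is this uniform bound on $q(N,p)$: it asserts that the combinatorial profusion of alternative merging schedules for a path---the cascade alone already yields $(N-1)!$ distinct edge-minimal people graphs that solve the $N$-path---cannot improve on the cascade by more than a subpolynomial factor, uniformly up to $N$ of size $\asymp(\log n)^2$. This is the jigsaw analogue of the sharp threshold for bootstrap percolation, and I expect it to require a Holroyd-style argument: an Aizenman--Lebowitz-type decomposition of an internally solved arc into a bounded-branching hierarchy of internally solved sub-arcs glued by single people edges, followed by an entropy count over such hierarchies whose tuning constant is $\theta(\infty)=-\int_0^\infty\log(1-e^{-t})\,dt=\pi^2/6$ from Lemma~\ref{num est}. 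Carrying the error down from ``subpolynomial for each fixed $N$'' to ``summable over $N$'' is the delicate point, and is presumably also what is needed to extract the lower-order term in the companion conjecture about $p_c(n)$.
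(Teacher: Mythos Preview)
The statement you are attempting to prove is stated in the paper as a \emph{conjecture}, not a theorem; the paper gives no proof. Its only support there is a heuristic: a computation (not shown) that a \emph{two-sided} growth version of the sufficient condition $\overline{B_i}$ produces the same leading constant $\pi^2/6$ with a correction of order $(\log n)^{-3/2}$, together with the informal remark that if none of these two-sided growth clusters overlap, then the puzzle is unlikely to be solved. So there is no ``paper's own proof'' to compare your proposal against, and what you have written should be read as a programme toward an open problem.

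Your reduction has the right shape, but two points deserve scrutiny. First, the displayed bound
\[
\probsub{\solve}{p_n}\ \le\ \prob{0\text{ and }1\text{ in the same block of }\clusters_\infty}\ =\ O\!\Bigl(p_n+\sum_{N=2}^{n}q(N,p_n)\Bigr)
\]
does not follow cleanly from your single-cut decomposition. After truncation, the probability of a people edge between $[-\tilde\ell^\ast,0]$ and $[1,\tilde r^\ast]$ is $\E\bigl[1-(1-p_n)^{(\tilde\ell^\ast+1)\tilde r^\ast}\bigr]$, and controlling this by $O(\sum_N q(N,p_n))$ requires more than ``on $\{\tilde\ell^\ast=0,\tilde r^\ast=1\}$ it is just $p_n$'': you need control on the \emph{joint} tails of $\tilde\ell^\ast$ and $\tilde r^\ast$, and $\prob{\tilde\ell^\ast\ge\ell}$ is not simply $q(\ell{+}1,p_n)$ (the events $\{[-\ell,0]\text{ internally solved}\}$ are not nested in $\ell$). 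A cleaner route, which you in fact allude to when invoking an Aizenman--Lebowitz decomposition, is to bypass the single cut entirely: on the ring every jigsaw cluster is an internally solved arc, and any internally solved arc of length $N$ splits into two internally solved consecutive sub-arcs joined by a people edge, so $\solve$ forces the existence of an internally solved arc of length in $[k,2k)$ for every $k\le n$. A first-moment bound over the $n$ possible positions and $k$ possible lengths then gives $\probsub{\solve}{p_n}\le nk\max_{N\in[k,2k)}q(N,p_n)$ directly, with $k$ chosen polylogarithmic.

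Second, and as you explicitly flag, the heart of the matter is the uniform estimate $q(N,p)\le n^{o(1)}\prod_{j=1}^{N-1}(1-(1-p)^j)$. This is the genuinely open step: it asserts that the cascade is essentially optimal among all merging schedules for a path, which is exactly the Holroyd-type sharpness statement for this model. Neither the paper nor your proposal supplies it. The paper's two-sided-growth heuristic and your hierarchy-entropy sketch are two views of the same missing lemma; your formulation via $q(N,p)$ is arguably the more actionable one, but until that bound is established the conjecture remains open.
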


This conjecture is based on a computation (not shown here) that implies
that a two-sided growth version of the sufficient condition used in the
proof of Proposition~\ref{ubd-ring} (i.e., the one-sided requirement
that $j$ is connected to $\{1, 2, \ldots, j-1\}$ for each $j$) yields\vspace*{1pt}
the same upper bound of $\pi^2 / (6 \log n)$ but with a correction of
order $(\log n)^{-3/2}$. Of course, even when the two-sided growth
process fails starting from every vertex, it may still be possible to
solve the puzzle by merging the clusters formed. However, if none of
these ``two-sided growth clusters'' intersect, then the puzzle is
unlikely to be solved, so we suspect that $c_1=\pi^2/6$ is the correct
lower bound.

Of particular interest for future study, the number of steps until the
process stops measures how efficiently the network solves the puzzle or
determines that it cannot be solved. We numerically simulated the
average number of steps until the process terminates for the ring
puzzle [Figure~\ref{figsteps}(b)]. As expected, the number of steps
increases around the phase transition $p_c(n)$. The process terminates
quickly when the puzzle is not solved, and the proof of
Proposition~\ref{ubd-ring} implies that the number of steps is at most
$O(\log n / p_n)$, though this is not the best bound possible. The
proof of Proposition~\ref{teoring-lower-bound} shows that for the
ring puzzle with $p_{n}\le1/(27\log n)$, the largest jigsaw cluster
(and hence number of steps) is smaller than $\log n$. As $p_n$
increases near $p_c(n)$, the puzzle may be solved, but just barely, so
the number of steps required is largest. As $p_n$ increases further,
more people-edges leads to larger clusters early in the process.
Determining the form of the function in Figure~\ref{figsteps}(b) is an
interesting open problem.

\begin{open}
\label{determinecurves}
For the ring puzzle, let $N_n$ be the smallest value of $i$ such that
$\mathcal{C}_i = \mathcal{C}_{i+1}$. Determine the asymptotic
behaviors of
\[
\mathbb{E} _{p_n} \bigl[N_n|{\mbox{\texttt{Solve}}}^c
\bigr]\quad\mbox{and}\quad E_{p_n} [N_n|{\mbox{\texttt{Solve}}}]
\]
as functions of $p_n$.
\end{open}

Finally, we suspect that the phase transition at $p_c(n)$ is sharp, in
the following sense.

\begin{conjecture}
Define $p_{\varepsilon}(n)$ as the unique $p$ for which $\mathbb
{P}_{p} (\emph{\mbox{\texttt{Solve}}} ) = \varepsilon$. Then
\[
p_{\varepsilon}(n)/p_{1-\varepsilon}(n)\to1
\]
as $n\to\infty$ for any $\varepsilon\in(0,1)$ fixed.
\end{conjecture}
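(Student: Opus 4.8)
The plan is to reduce the conjecture to a lower bound on the total influence of $\solve$ across the critical window, and then to estimate that influence via the ``independent blocks'' picture behind Proposition~\ref{ubd-ring}. I treat the ring puzzle, where $p_{c}(n)=\Theta(1/\log n)$ by Theorem~\ref{thm-ring}; a general bounded-degree puzzle would first require the matching bound $p_{c}(n)=\Omega(1/\log n)$, which this paper does not prove. Write $g_{n}(p):=\probsub{\solve}{p}$, a nondecreasing polynomial in $p$. Since $\solve$ is increasing, Russo's formula gives $g_{n}'(p)=\sum_{e}\probsub{e\text{ pivotal for }\solve}{p}$, and the reduction is this: if $g_{n}'(p)=\omega(\log n)$ uniformly on the window $W_{n}:=\{p:g_{n}(p)\in[\eps,1-\eps]\}$, then, since $p_{\eps}(n),p_{1-\eps}(n)=\Theta(1/\log n)$ by Theorem~\ref{thm-ring}, integrating $g_{n}'$ over $W_{n}$ gives $1-2\eps=\int_{p_{\eps}}^{p_{1-\eps}}g_{n}'\,dp\ge(p_{1-\eps}-p_{\eps})\cdot\omega(\log n)$, hence $p_{1-\eps}-p_{\eps}=o(1/\log n)$ and $p_{1-\eps}/p_{\eps}=1+o(1)$. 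The generic symmetry-based sharp-threshold theorems (Friedgut--Kalai, Bourgain--Kalai) do not suffice here: they bound the window only by $O(1/\log n)$, the same order as $p_{c}(n)$ itself, which yields a bounded ratio but not $1+o(1)$.

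To estimate $g_{n}'(p)$, I would use the block picture. Fix small $\delta>0$; as in Lemma~\ref{lem:blocks}, cut the $n$-cycle into $k=\Theta(n/m)$ arcs $\cB_{1},\dots,\cB_{k}$ of length in $[m,2m]$ with pairwise disjoint edge sets, where $m:=\lceil(1+\delta)\log n/\eps_{n}\rceil$, and let $q(p):=\probsub{\cB_{1}\text{ internally solved}}{p}$ and $S:=\sum_{i<k}\indicator{\cB_{i}\text{ internally solved}}$, a sum of independent $\mathrm{Bernoulli}(q(p))$'s. By Lemma~\ref{suffcond} (conditioning on the lowest-index solved block) one has $g_{n}(p)\ge\bigl(1-(1-q(p))^{k-1}\bigr)(1-n^{-\delta})$; by Lemma~\ref{num est}, $\log q(p)=-\tfrac{\pi^{2}}{6\eps_{n}}(1+o(1))$, so $q(p)$ is exponentially sensitive to $p$: $\tfrac{d}{dp}\log q(p)=\tfrac{\pi^{2}}{6\eps_{n}^{2}}(1+o(1))=\Theta((\log n)^{2})$. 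On the critical window one expects $k\,q(p)=\Theta(1)$, whence $\tfrac{d}{dp}\bigl(1-(1-q)^{k-1}\bigr)=(k-1)(1-q)^{k-2}q(p)\,\tfrac{d}{dp}\log q(p)=\Theta((\log n)^{2})$. The target is therefore $g_{n}'(p)=\Theta((\log n)^{2})=\omega(\log n)$ on $W_{n}$. Equivalently, by Russo's formula $g_{n}'(p)=\tfrac{1}{1-p}\,\E_{p}\bigl[\indicator{\solve^{c}}\cdot\#\{\text{absent people-edges whose addition solves the puzzle}\}\bigr]$, so the target says that on the (probability-$\ge\eps$) event that the puzzle is not solved, there are order $(\log n)^{2}$ single edges whose addition would solve it.

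The main obstacle is making this rigorous, because a lower bound on $g_{n}$ together with knowledge of the location of $W_{n}$ do not by themselves control $g_{n}'$: one needs the matching upper-bound structure, namely that---up to probability $o(1)$---solving the ring forces some arc of length $\Theta(\log n/\eps_{n})$ to be internally solved, together with precise asymptotics of $q(p)$ and control of $q'(p)$. These are exactly the phenomena behind the open conjecture $c_{1}=\pi^{2}/6$ of Section~\ref{discussion}, and more: one must rule out ``distributed'' solutions in which no single short arc is internally solved but many partial solutions merge, quantitatively at the scale $(\log n)^{-3/2}$ that the two-sided-growth heuristic there suggests. I would attack the upper-bound side by refining the cut-point argument of Proposition~\ref{thm:ring-lower-bound}: explore the jigsaw dynamics one cluster at a time and show that, before any arc of length $\asymp\log n/\eps_{n}$ becomes internally solved, the solved clusters are short intervals whose growth is dominated by a subcritical branching-type process governed by the one-sided estimates of Lemma~\ref{num est}, so that the dynamics halts with probability $1-o(1)$; sharpening this to scale $(\log n)^{-3/2}$ appears to need new input. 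A less demanding variant that would still give the conjecture is to prove the structural statement only at a perturbed parameter---$g_{n}(p)\le\probsub{\exists\text{ internally solved arc of length }\Theta(\log n/\eps_{n})}{p(1+o(1))}+o(1)$---since an $o(1)$ shift of $p$ is absorbed by the window estimate; but even this weaker statement requires understanding the near-critical cluster structure of the model.
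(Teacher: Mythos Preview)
This statement is a \emph{conjecture} in the paper, not a theorem: the paper offers no proof and states it as an open problem in Section~\ref{discussion}. So there is no ``paper's own proof'' to compare against.

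On its own merits, your proposal is a reasonable research outline, and you are candid that it is not a proof. The reduction via Russo's formula is correct: if you could show $g_n'(p)=\omega(\log n)$ uniformly on $W_n$, the conjecture would follow for the ring. Your heuristic that $g_n'(p)=\Theta((\log n)^2)$ is plausible, and you correctly identify where the argument breaks down. The lower bound $g_n(p)\ge (1-(1-q)^{k-1})(1-n^{-\delta})$ does \emph{not} give a lower bound on $g_n'(p)$---differentiating a lower bound on a function does not bound its derivative---so the block computation only shows that a particular sufficient event has a sharp threshold, not that $\solve$ itself does. To transfer this to $g_n'$ you would need the structural upper bound you describe: that $\solve$ is, up to $o(1)$ probability, \emph{equivalent} to the existence of an internally solved arc of length $\Theta(\log n/\eps_n)$. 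As you note, this is essentially the content of the paper's other open conjecture that $c_1=\pi^2/6$, and in fact demands more (a two-sided sandwich at the right scale). The ``less demanding variant'' you propose at the end---an upper bound at a perturbed parameter $p(1+o(1))$---would indeed suffice, but you give no indication of how to prove it beyond refining the cut-point argument of Proposition~\ref{thm:ring-lower-bound}, and that argument as written only works for $\lambda<1/27$, far below the window. Closing the gap between $1/27$ and $\pi^2/6$ is precisely what the paper leaves open, so your proposal reduces one conjecture to (a strengthening of) another.
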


Other avenues of future study include extensions and modifications of
jigsaw percolation. Different people and puzzle graphs (especially ones
with unbounded degree) are one natural direction, with mathematical and
practical interest.

\begin{open}
Consider other people and puzzle graphs, especially puzzles with
unbounded degree.
\end{open}

Another natural direction is to modify the model to make it more
realistic. For example, by analogy with the ``adjacent-edge''
modification of explosive percolation~\cite{DSouza2010}, in the
``adjacent-edge'' (AE) version of jigsaw percolation, the rule for
merging two clusters $U$ and $W$ requires that the people- and
puzzle-edges between $U$ and $W$ coincide on at least one vertex. That
is, in the AE rule, two jigsaw clusters $U$ and $W$ merge only if there
exist $u\in U$ and $w,w'\in W$ such that $(u,w)\in E_{\mathrm
{puzzle}}$ and $(u,w')\in E_{\mathrm{people}}$. In this version, a
single person must determine whether her friends' jigsaw clusters fit
with her piece of the puzzle, but she does not need to be aware of how
her entire jigsaw cluster fits with the clusters of her acquaintances.
This process is slightly more local, so we suspect that more detailed,
rigorous results are possible. Note that all of our results for jigsaw
percolation also hold for AE jigsaw percolation.

\begin{open}
Does the behavior of AE jigsaw percolation differ significantly from
that of jigsaw percolation for some class of puzzle graphs? Can more
precise statements be made about the behavior of AE jigsaw percolation
on the ring graph?
\end{open}

Another potentially interesting modification is to change the map from
people to puzzle pieces so that it is no longer bijective. This would
allow many people to have the same idea and a single person to have
multiple ideas.

%
\begin{open}
What is the effect of changing the map between people and puzzle pieces
on a network's ability to solve the puzzle?
\end{open}

In this paper, each person has one unique puzzle piece (or idea). The
critical value $p_c(n)$ marks the phase transition in the connectivity
of the Erd\H{o}s--R\'{e}nyi people graph at which it begins to solve
the puzzle with high probability. For a large class of puzzle graphs
($n$-cyles in Theorem~\ref{teo-ring}, bounded-degree puzzles in
Theorem~\ref{teo-d>1}), we show that this phase transition decreases
with $n$. However, the \emph{critical average degree}, $n p_c(n)$,
increases with the size $n$ of the social network and of the puzzle.
Thus, as social networks and the puzzles they try to solve grow
commensurately in size, people must interact with more people in order
to realize enough compatible, partial solutions. This model therefore
suggests a mechanism for the recent statistical claims that as cities
become more dense, people interact more~\cite{Schlapfer2012} and hence
innovate more~\cite{Bettencourt2007,Bettencourt2010}. Furthermore,
most social networks wish to minimize communication overhead; the
critical value $p_c(n)$ indicates the minimal communication needed to
collaboratively solve large puzzles.

Surprisingly, social networks with power-law degree distributions
lack the connectivity needed to solve bounded-degree puzzles
(Proposition~\ref{jigsawwithCM}). However, scientific
collaboration networks manage to solve puzzles despite their
heavy-tailed degree distributions~\cite
{Barabasi2002,Newman2001phys,Newman2001pnas}. This highlights the
importance of
considering more realistic assumptions in the model and of drawing
from (still nascent) studies on knowledge spaces~\cite{Chen2009}.

This work, the first step in analyzing a rich, mathematical model,
begins to suggest why certain social networks stifle creativity and why
others innovate. With a homogeneous degree distribution and
sufficiently many interactions, a social network can collectively merge
the pieces of a large puzzle---and perhaps merge the ideas that lead to
a great idea.

\section*{Acknowledgments}
We thank Rick Durrett, M.~Puck Rombach, Peter Mucha, Raissa D'Souza,
Alex Waagen, Pierre-Andr\'e No\"el and Madeleine D\"app for useful
discussions. We also thank an anonymous referee for helpful comments
that improved the presentation of the article.




%

\printaddresses
\end{document}